\newcommand{\lleft}{\left}
\newcommand{\rrvert}{\vert}
\newcommand{\rright}{\right}
\newcommand{\llvert}{\vert}
\renewcommand{\cite}[1]{\citet{#1}}
\newcommand{\eqref}[1]{(\ref{#1})}
\newcommand{\bb}{\boldsymbol{\beta}}
\newcommand{\bg}{\mathbf{g}}
\newcommand{\bt}{\beta}
\newcommand{\E}{\mathsf{E}}
\newcommand{\ga}{g}
\newcommand{\half}{\frac{1}{2}}
\newcommand{\iid}{i.i.d.\ }
\newcommand{\ind}{\mathbf{1}}
\newcommand{\ve}{\varepsilon}
\renewcommand{\th}{\theta}
\newtheorem{proposition}{Proposition}[section]
\newtheorem{lemma}[proposition]{Lemma}
\newtheorem{theorem}[proposition]{Theorem}
\newtheorem{corollary}[proposition]{Corollary}
\begin{document}
\begin{frontmatter}

\title{The Bayesian Analysis of Complex, High-Dimensional Models: Can~It~Be~CODA?}
\runtitle{CODA Bayes?}

\begin{aug}
\author[A]{\fnms{Y.}~\snm{Ritov}\corref{}\ead[label=e1]{yaacov.ritov@gmail.com}\ead[label=u1,url]{http://pluto.mscc.huji.ac.il/\textasciitilde yaacov}},
\author[B]{\fnms{P. J.}~\snm{Bickel}\ead[label=e2]{bickel@stat.berkeley.edu}\ead[label=u2,url]{http://www.stat.berkeley.edu/\textasciitilde bickel}},
\author[C]{\fnms{A. C.}~\snm{Gamst}\ead[label=e3]{acgamst@math.ucsd.edu}\ead[label=u3,url]{http://biostat.ucsd.edu/acgamst.htm}}
\and
\author[D]{\fnms{B. J. K.}~\snm{Kleijn}\ead[label=e4]{B.Kleijn@uva.nl}\ead[label=u4,url]{http://staff.science.uva.nl/\textasciitilde bkleijn/}}
\runauthor{Ritov, Bickel, Gamst and Kleijn}

\affiliation{The Hebrew University, University of California, Berkeley,
University of California, San Diego, Korteweg-de~Vries Institute for
Mathematics}

\address[A]{Y. Ritov is Professor, Department of Statistics,
The Hebrew University,
91905 Jerusalem, Israel \printead{e1,u1}.}
\address[B]{P. J. Bickel is Professor, Department of Statistics,
University of California,
Berkeley, California 94720-3860, USA \printead{e2,u2}.}
\address[C]{A. C. Gamst is Professor, Biostatistics and
Bioinformatics,
University of California,
San Diego, California 92093-0717, USA \printead{e3,u3}.}
\address[D]{B. J. K. Kleijn is Assistant Professor in Stochastics,
Korteweg-de Vries Institute for Mathematics,
P.O. Box 94248, 1090 GE Amsterdam,
The Netherlands \printead{e4,u4}.}
\end{aug}

%
\begin{abstract}
We consider the Bayesian analysis of a few complex, high-dimensional
models and show that intuitive priors, which are not tailored to the
fine details of the model and the estimated parameters, produce
estimators which perform poorly in situations in which good,
simple frequentist estimators exist.
The models we consider are: stratified sampling, the partial linear
model, linear and quadratic functionals of white noise and estimation
with stopping times. We present a strong version of Doob's consistency
theorem which demonstrates that the existence of a uniformly
$\sqrt{n}$-consistent estimator ensures that the Bayes
posterior is $\sqrt{n}$-consistent for values of the parameter
in subsets of prior probability 1. We also demonstrate that it
is, at least, in principle, possible to construct Bayes priors
giving both global and local minimax rates, using a suitable
combination of loss functions. We argue that there is no
contradiction in these apparently conflicting findings.
\end{abstract}

%
\begin{keyword}
\kwd{Foundations}
\kwd{CODA}
\kwd{Bayesian inference}
\kwd{white noise models}
\kwd{partial linear model}
\kwd{stopping time}
\kwd{functional estimation}
\kwd{semiparametrics}
\end{keyword}
\end{frontmatter}

\section{Introduction}\label{introduction}

We show, through a number of illustrative examples of general phenomena,
some of the difficulties faced by application of the Bayesian paradigm
in the analysis of data from complex, high-dimensional models. We do
not argue against the use of Bayesian methods. However, we judge the
success of these methods from the frequentist/robustness point of view,
in the tradition of Bernstein, von Mises, and Le Cam; and more
recently \cite{cox-1993}. Some references are: \cite{bayarri-berger-2004},
\cite{diaconis-freedman-1993}, \cite{diaconis-freedman-1998},
\cite{freedman-1963}, \cite{freedman-1999}, \cite{lecam-yang-1990}
and \cite{lehmann-casella-1998}.

The extent to which the subjective aspect of data analysis is
central to the modern Bayesian point of view is debatable. See the
dialog between \cite{goldstein-2006} and \cite{berger-2006-a} and
the discussion of these two papers. However, central to any Bayesian
approach is the posterior distribution and the choice of prior. Even
those who try to reconcile Bayesian and frequentist approaches
(cf. \citeauthor{bayarri-berger-2004}, \citeyear{bayarri-berger-2004}), in the case of conflict, tend to
give greater weight to considerations based on the posterior
distribution, than to those based on frequentist assessments;
cf. \cite{berger-2006-b}.

An older and by now less commonly held point of view is that rational
inquiry requires the choice of a Bayes prior and exclusive use of the
resulting posterior in inference; cf. \cite{savage-1961}
and \cite{lindley-1953}. A~modern weaker version claims: ``Bayes
theorem provides a powerful, flexible tool for examining the actual
or potential ranges of uncertainty which arise when one or more
individuals seek to interpret a given set of data in light of
their own assumptions and `uncertainties about their
uncertainties','' (\citeauthor{smith-1986}, \citeyear{smith-1986}).
This point of view, which is the philosophical foundation of
the Bayesian paradigm, has consequences. Among them are the strong
likelihood principle, which says that all of the information in the
data is contained in the likelihood function, and the stopping time
principle, which says that stopping rules are irrelevant to
inference. We argue
that a commitment to these principles can easily lead to absurdities
which are striking in high dimensions. We see this as an argument
against ideologues.

We discuss our examples with these two types of Bayesian analysts in mind:
\begin{enumerate}[II.]
\item[I.]The Bayesian who views his prior entirely as reflecting his
beliefs and the posterior as measuring the changes in these beliefs due
to the data. Note that this implies strict adherence to the likelihood
principle, a uniform plug-in principle, and the stopping time principle.
Loss functions are not specifically considered in selecting the prior.
\item[II.]The pragmatic Bayesian who views the prior as a way of
generating decision theoretic procedures, but is content with priors
which depend on the data, insisting only that analysis starts with a
prior and ends with a posterior.
\end{enumerate}
For convenience, we refer to these Bayesians as type I and type II.

The main difference we perceive between the type II Bayesian and
a frequentist is that when faced with a specific problem, the type
II Bayesian selects a unique prior, uses Bayes rule to produce the
posterior and is then committed to using that posterior for all
further inferences. In particular, the type II Bayesian is free
to consider a particular loss function in selecting his prior and,
to the extent that this is equivalent to using a data-dependent
prior, change the likelihood; see \cite{wasserman-2000}. That
the loss function and prior are strongly connected has been
discussed by Rubin; see \cite{bock-2004}.

We show that, in high-dimensional (non or semiparametric) situations
Bayesian procedures based on priors chosen by one set of criteria, for
instance, reference priors, selected so that the posterior for a \mbox{possibly}
infinite dimensional parameter $\bt$ converges at the minimax rate, can
fail badly on other sets of criteria, in \mbox{particular}, in yielding
asymptotically minimax, semiparametrically efficient, or even
$\sqrt{n}$-consistent estimates for specific one-dimensional
parameters, $\th$. We show by example that priors leading to efficient
estimates of one-dimensional parameters can be constructed but that the
construction can be subtle, and typically does not readily also give
optimal global minimax rates for infinite dimensional features of the
model. It is true, as we argue in Section~\ref{positive}, that by
general considerations, Bayes priors giving minimax rates of convergence
for the posterior distributions for both single or ``small'' sets of
parameters and optimal rates in global metrics can be constructed,
in principle. Although it was shown in \cite{bickel-ritov-2003} that this
can be done consistently with the ``plug-in principle,'' the procedures
optimal for the composite loss are not natural or optimal, in general,
for either component. There is no general algorithm for constructing
such priors and we illustrate the failure of classical type~II Bayesian
extensions (see below) such as the introduction of hyperparameters.
Of course, Bayesian procedures are optimal on their own terms and we prove
an extension of a theorem of Doob at the end of this paper which makes
this point. As usual, the exceptional sets of measure zero in this
theorem can be quite large in nonparametric settings.

For smooth, low-dimensional parametric models, the Bernstein--von Mises
theorem ensures that for priors with continuous positive densities, all
Bayesian procedures agree with each other and with efficient frequentist
methods, asymptotically, to order $n^{-1/2}$; see, for example,
\cite{lecam-yang-1990}. At the other extreme, even with independent
and identically distributed data, little can be said about the extreme
nonparametric model $\mathcal{P}$, in which nothing at all is \mbox{assumed} about
the common distribution of the observations,~$P$. The natural quantities
to estimate, in this situation, are bounded linear functionals of the form
$\th= \int\ga(x) \,dP(x)$, with $\ga$ bounded and continuous.
There are unbiased, efficient estimates of these functionals and
Dirichlet process priors, concentrating on small but dense subsets of
$\mathcal{P}$ yielding estimates equivalent to order $n^{-1/2}$ to the
unbiased ones; see \cite{ferguson-1973}, for instance.

The interesting phenomena occur in models between these two extremes.
To be able to even specify natural unbounded linear functionals such as
the density $p$ at a point, we need to put smoothness restrictions on $P$
and, to make rate of convergence statements, global metrics such as $L_2$
must be used. Both Bayesians and frequentists must specify not only the
structural features of the model but smoothness constraints. Some
of our examples will show the effect of various smoothness
assumptions on Bayesian inference.

For ease of exposition, in each of our examples, we consider only
independent and identically distributed (i.i.d.) data and our focus is on
asymptotics and estimation.
Although our calculations are given almost exclusively for
specific Bayesian decision theoretic procedures under $L_2$-type
loss, we believe (but do not \mbox{argue} in detail) that the difficulties
we highlight carry over to other inference procedures, such as
the construction of confidence regions. Here is one implication
of such a result. Suppose that we can construct a Bayes credible
region $C$ for an infinite dimensional parameter $\bt$ which has
good frequentist and Bayesian properties, for example, asymptotic minimax
behavior for the specified model, as well as
$P (\bt\in C | X )$ and
$P (\bt\in C(X) | \bt ) > 1-\alpha$.
Then we automatically have a credible region $q(C)$ for any $q(\bt)$.
Our examples will show, however, that this region can be absurdly
large. So, while a Bayesian might argue that parameter estimation
is less important than the construction of credible regions, our
examples carry over to this problem as well.

Our examples will be discussed heuristically rather than
exhaustively, but we will make it clear when a formal
proof is needed. There is a body of theory in the area
(cf. \citeauthor{ghosal-etal-2000}, \citeyear{ghosal-etal-2000},
\citeauthor{kleijn-vandervaart-2006}, \citeyear{kleijn-vandervaart-2006},
and Bickel and Kleijn, \citeyear{bickel-kleijn-2012}, among others), giving
specific conditions under which some finite dimensional
intuition persists in higher dimensions. However, in this
paper we emphasize how easily these conditions are violated
and the dramatic consequences of such violations. Our examples
can be thought of as points of the parameter space to which the
prior we use assigns zero mass. Since all points of the
parameter space are similarly assigned zero mass, we have to
leave it to the readers to judge whether these points are,
in any sense, exceptional.

In Section~\ref{stratified}, we review an example introduced
in \cite{robins-ritov-1997}. The problem is that of
estimating a real parameter in the presence of an \mbox{infinite}
dimensional ``nuisance'' parameter. The parameter of interest
admits a very simple frequentist estimator which is
$\sqrt{n}$-consistent without any assumptions on the nuisance
parameters at all, as long as the sampling scheme is reasonable.
In this problem, the type I Bayesian is unable
to estimate the parameter of interest at the $\sqrt{n}$-rate at
all, without making severe smoothness assumptions on the infinite
dimensional nuisance parameter.
In fact, we show that if the nuisance parameters are too rough,
a type I Bayesian is unable to find any prior giving even a
consistent estimate of the parameter of interest.
On the other hand, we do construct
priors, tailored to the parameter we are trying to estimate, which
essentially reproduce the frequentist estimate.
Such priors may be satisfactory to a type II Bayesian, but surely
not to Bayesians of type I. The difficulty here is that a
commitment to the strong likelihood principle forces the
Bayesian analyst to ignore information about a parameter
which factors out of the likelihood and he is forced to
find some way of connecting that parameter to the parameter
of interest, either through reparameterization, which only
works if the nuisance parameter is smooth enough, or by
tailoring the prior to the parameter of interest.

In Section~\ref{partial}, we turn to the classical partial linear
regression model. We recall results of \cite{wang-etal-2011} which
give simple necessary and sufficient conditions on the nonparametric
part of the model for the parametric part to be estimated efficiently.
We use this example to show that a natural class of Bayes priors, which
yield minimax estimates of the nonparametric part of the model under
the conditions given in \cite{wang-etal-2011}, lead to Bayesian estimators
of the parametric part which are inefficient. In this case, there is
auxiliary information in the form of a conditional expectation which
factors out of the likelihood but is strongly associated with the
amount of information in the data about the parameter of interest.
The frequentist can estimate this effect directly, but the type I
Bayesian is forced to ignore this information and, depending on
smoothness assumptions, may not be able to produce a consistent
estimate of the parameter of interest at all. The fact that, for
a sieve-based frequentist approach, two different bandwidths are
needed for local and global estimation of parameters in this
problem has been known for some time; see \cite{chen-shiau-1994}.

In Section~\ref{plugin}, we consider the Gaussian white noise model
of \cite{ibragimov-hasminskii-1984}, \cite{donoho-johnstone-1994},
and \cite{donoho-johnstone-1995}. Here, we show that from a frequentist
point of view we can easily construct uniformly $\sqrt{n}$-consistent
estimates of all bounded linear functionals. However, both the type I
and type II Bayesian, who are restricted to the use of one and only
one prior, must fail to estimate some bounded linear functionals at
the $\sqrt{n}$-rate. This is because both are committed to the
plug-in principle and, as we argue, any plug-in estimator will fail to
be uniformly consistent. On the positive side, we show that it is easy
to construct tailor-made Bayesian procedures for any of the specific
functionals we consider in this section. Again, reparameterization,
which in this case is a change of basis, is important. The resulting
Bayesian procedures are capable of simultaneously estimating both the
infinite dimensional features of the model at the minimax rate and the
finite dimensional parameters of interest efficiently, but linear
functionals which might be of interest in subsequent inferences,
and cannot be estimated consistently, remain. We give a graphic example,
in this section, to demonstrate our claims.

A second example, examined in Section~\ref{norm}, concerns the
estimation of the norm of a high-dimen\-sional vector of means, $\bb$.
Again, for a suitably large set of $\bb$, we can show that the priors
normally used for minimax estimation of the vector of means in the
$L_2$ norm do not lead to Bayesian estimators of the norm of $\bb$ which
are $\sqrt{n}$-consistent. Yet there are simple frequentist estimators
of this parameter which are efficient. We then give a constructive
argument showing how a type II Bayesian can bypass the difficulties
presented by this model at the cost of selecting a nonintuitive prior
and various inconsistencies. A type II Bayesian can use a data-dependent
prior which allows for simultaneous estimation of $\bb$ at the minimax
rate and this specific parameter of interest efficiently. These
examples show that, in many cases, the choice of prior is subtle,
even in the type II context, and the effort involved in constructing
such a prior seems unnecessary, given that good, general-purpose
frequentist estimators are easy to construct for the same parameters.

In Section~\ref{stopping}, we give a striking example in which,
for Gaussian data with a high-dimensional parameter space, we can,
given any prior, construct a stopping time such that the Bayesian,
who must ignore the nature of the stopping times, estimates the
vector of means with substantial bias. This is a common feature
of all our examples. In high dimensions, even for large sample
sizes, the bias induced by the Bayes prior overwhelms the data.

In Section~\ref{positive}, we extend Doob's theorem, showing that
if a suitably uniform $\sqrt{n}$-consistent
estimator of a parameter exists, then necessarily the Bayesian estimator
of the parameter is $\sqrt{n}$-consistent on a set of parameter values
which has prior probability one. We also give another elementary result
showing that it is in principle possible to construct Bayes priors
giving both global and local minimax rates, using a suitable combination
of loss functions. We summarize our findings in Section~\ref{summary}.

In  Appendix~\ref{proofs}, we give proofs of many of the assertions we have
made in the previous sections. Throughout this paper, $\th$ is a
finite-dimensional parameter of interest, $\bt$ is an infinite-dimensional
nuisance parameter and $\ga$ is an infinite-dimensional parameter which is
important for estimating $\th$ efficiently, but is missing from the joint
likelihood for $(\th, \bt)$; $\ga$ might describe the sampling scheme,
the loss function or the specific functional $\th(\bt) = \th(\bt,\ga)$
of interest. We use $\pi$ for priors and $\ga$ and $\bt$ are given as
$\bg$ and $\bb$ when it is easier to think of them as infinite-dimensional
vectors than functions.

\section{Stratified Random Sampling}\label{stratified}

\cite{robins-ritov-1997} consider an infinite-dimen\-sional
model of continuously stratified random sampling in which one
has \iid observations $W_i=(X_i,R_i,\allowbreak  Z_i)$, $i=1, \ldots, n$;
the $X_i$ are uniformly distributed in $[0,1]^d$; and
$Z_i=R_i Y_i$. The variables $R_i$ and $Y_i$ are conditionally
independent given $X_i$ and take values in the set $\{0, 1\}$.
The function $\ga(X)=\E(R|X)$ is known, with $\ga> 0$ almost
everywhere, and $\bt(X)=\E(Y|X)$ is unknown. The parameter of
interest is $\th=\E(Y)$.

It is relatively easy to construct a reasonable estimator for
$\th$ in this problem. Indeed, the classical Horvitz--Thompson
(HT) estimator (cf. \citeauthor{cochran-1977}, \citeyear{cochran-1977}),
\[
\widehat{\th}= n^{-1}\sum_{i=1}^n
Z_i/\ga(X_i),
\]
solves the problem nicely. Because
\begin{eqnarray*}
\E\bigl\{RY/\ga(X)\bigr\} &=& \E \bigl\{\E(R|X)\E(Y|X)/\ga(X) \bigr\}
\\
&=& \E\E(Y|X) = \th,
\end{eqnarray*}
the estimator is consistent without any further assumptions.
If we assume that $\ga$ is bounded from below, the estimator
is $\sqrt{n}$-consistent and asymptotically normal.

\subsection{Type I Bayesian Analysis}\label{stratified:type-one}

As $\ga$ is known and we have assumed that the $X_i$ are uniformly distributed,
the only parameter which remains is $\bt$, where $\bt(X) = \E(Y|X)$.
Let $\pi$
be a prior density for $\bt$ with respect to some measure $\mu$. The joint
density of $\bt$ and the observations $W_1,\ldots, W_n$ is given by
\begin{eqnarray*}
p(\bt, \mathbf{W}) &=& \pi(\bt) \prod_{i : R_i=1}
\bt(X_i)^{Y_i} \bigl(1-\bt(X_i)
\bigr)^{1-Y_i}
\\
&&{} \cdot\prod_{i=1}^n
\ga(X_i)^{R_i} \bigl(1-\ga(X_i)
\bigr)^{1-R_i},
\end{eqnarray*}
as $Z_i = Y_i$ when $R_i = 1$. But this means that the posterior for
$\bt$
has a density $\pi(\bt|\mathbf{W})$ with
%
\begin{equation}
\label{CODApost} \pi(\bt|\mathbf{W}) \propto \pi(\bt) \prod
_{i : R_i=1} \bt(X_i)^{Y_i} \bigl(1-
\bt(X_i) \bigr)^{1-Y_i}.\hspace*{-20pt}
\end{equation}
Of course, this is a function of only those observations for which $R_i
= 1$,
that is, for which the $Y_i$ are directly observed. The observations
for which
$R_i = 0$ are deemed uninformative.

If $\beta$ is assumed to range over a smooth parametric model,
and the known $\ga$ is bounded away from $0$, one can check that the
Bernstein--von Mises theorem applies, and that the Bayesian estimator of
$\th$ is efficient, $\sqrt{n}$-consistent and necessarily better than
the HT estimator. Heuristically, this continues to hold for minimax
estimation of $\th$ and $\bt$ over ``small'' nonparametric models
for $\bt$; that is, sets of very smooth $\bt$;
see \cite{bickel-kleijn-2012}.

In the nonparametric case, if we assume that the prior for $\bt$ does
not depend on $\ga$, then, because the likelihood function does not depend
on $\ga$, the type I Bayesian will use the same procedure whether $\ga$ is
known or unknown; see \eqref{CODApost}. That is, the type I Bayesian
will behave as if $\ga$ were unknown. This is problematic because,
as \cite{robins-ritov-1997} argued and we now
show, unless $\bt$ or $\ga$ are sufficiently smooth, the type I Bayesian
cannot produce a consistent estimator of $\th$. To the best of our knowledge,
the fact that there is no consistent estimator of $\th$ when $\ga$ is unknown,
unless $\bt$ or $\ga$ are sufficiently smooth, has not been emphasized before.

Note that our assumption that the prior for $\bt$ does not depend on
$\ga$
is quite plausible. Consider, for example, an in-depth survey of students,
concerning their scholastic interests. The design of the experiment is
based on all the information the university has about the students.
However, the statistician is interested only in whether a student is firstborn
or not. At first, he gets only the list of sampled students with their
covariates. At this stage, he specifies his prior for $\bt$. If he is
now given $\ga$, there is no reason for him to change what he believes
about $\bt$, and no reason for him to include information about $\ga$
in his prior.

The fact that, if $\ga$ is unknown, $\th$ cannot be estimated
unless either $\ga$ or $\bt$ is smooth enough, is true even in
the one-dimensional case. Our analysis is similar to that in
\cite{robins-etal-2009}. Suppose the $X_i$ are uniformly
distributed on the unit interval, and $\ga$ is given by
\[
\ga(x) = \frac{1}{2} + \frac{1}{4}\sum_{i=0}^{m-1}
s_i \psi (mx - {i} ),
\]
where $m=m_n$ is such that $m_n/n\rightarrow\infty$;
the sequence $s_1,\ldots,s_m\in\{-1,1\}$
is assumed to be exchangeable with $\sum s_i = 0$, and
$\psi(x)=\ind (0\leq x <\half )-\ind (\half\leq x < 1 )$.
Furthermore, assume that $\bt(x)\equiv5/8$ or $\bt(x)\equiv\ga(x)$.
With probability converging to $1$, there will be no interval of length
$1/m$ with more than one $X_i$. However, given that there is one
$X_i\in(j/m,(j+1)/m)$, then the distribution of $(R_i,Z_i)$
is the same whether $\bt(x)\equiv5/8$ or $\bt(x)\equiv\ga(x)$, and
hence $\th$ is not identifiable; it can be either $5/8$ or $1/2$.
This completes the proof.

Note that, in principle, both $\E(YR|X)=\bt(X)\ga(X)$ and $E(R|X)=\ga(X)$
are, in general, estimable, but not uniformly to adequate precision on
``rough'' sets of $(\ga,\bt)$. One can also reparameterize in terms of
$\xi(X) = \E(YR|X)$ and $\th$. This forces $\ga$ into the likelihood,
but one still needs to assume $\xi(X)$ is very smooth. In the above
argument, the roughness of the model goes up with the sample size,
and this is what prevents consistent estimation.

\subsection{Bayesian Procedures with Good Frequentist Behavior}\label{stratified:good}

In this section, we study plausible priors for Type II Bayesian
inference. These
priors are related to those in \cite{wasserman-2004}, \cite{harmeling-toussaint-2007}
and \cite{li-2010}. We need to build knowledge of $\ga$ into the prior,
as we argued
in Section~\ref{stratified:type-one}. We do so first by following the
suggestion in
\cite{harmeling-toussaint-2007} for Gaussian models.

Following \cite{wasserman-2004}, we consider now a somewhat simplified version
of the continuously stratified random sampling model, in which the
$X_i$ are
uniformly distributed on $1,\ldots, N$, with $N=N_n\gg n$, such that, with
probability converging to $1$, there are no ties. In this case, the unknown
parameter $\bt$ is just the $N$-vector, $\bb= (\bt_1, \ldots, \bt_N)$.
Our goal is to estimate $\th= N^{-1} \sum_{i=1}^N \bt_i$.

To construct the prior, we proceed as follows. Assume that the components
$\bt_i$ are independent, with $\bt_i$ distributed according to a Beta
distribution with parameters $p_{\tau}(i)$ and $1-p_{\tau}(i)$, and
\[
p_{\tau}(i)=\frac{e^{\tau/\ga_i}}{1+e^{\tau/\ga_i}},
\]
with $\tau$ an unknown hyperparameter.
Let ${\th}^*=N^{-1}\cdot \sum_{i=1}^N p_{\tau}(i)$. Note that under the prior
$\th=N^{-1}\cdot \sum_{i=1}^N \bt_i = {\th}^*+O_{P}(N^{-1/2})$, by the CLT. We
now aim to estimate ${\th}^*$. In the language of \cite{lindley-smith-1972},
we shift interest from a random effect to a fixed effect. This is level 2
analysis in the language of \cite{everson-morris-2000}. The difference
between $\th$ and ${\th}^*$ is apparent in a full population analysis; see,
for example, \cite{berry-etal-1999} and \cite{li-1999}, where the real
interest is in~${\th}^*$.

In this simplified model, marginally, $X_1, \ldots, X_n$ are \iid
uniform on $1, \ldots, N$, $Y_i$ and $R_i$ are independent given $X_i$,
with $Y_i| X_i\sim\operatorname{Binomial} (1,p_{\tau}(X_i) )$, and
$R_i| X_i\sim\operatorname{Binomial} (1,g(X_i) )$.
The log-likelihood function for $\tau$ is given by
\[
\ell(\tau)=\sum_{R_i=1} \bigl[Y_i\log
p_{\tau}(X_i) +(1-Y_i)\log
\bigl(1-p_{\tau}(X_i) \bigr) \bigr].
\]
This is maximized at $\hat\tau$ satisfying
\begin{eqnarray*}
0 &=& n^{-1}\sum_{R_i=1}
\biggl(Y_i\frac{\dot p_{\hat\tau}(X_i)}{
p_{\hat\tau}(X_i)} - (1-Y_i) \frac{\dot p_{\hat\tau}(X_i)}{
1-p_{\hat\tau}(X_i)}
\biggr)
\\
&=& n^{-1} \sum_{R_i=1}\frac{\dot p_{\hat\tau}(X_i)}{
p_{\hat\tau}(X_i) (1-p_{\hat\tau}(X_i) )}
\bigl(Y_i-p_{\hat\tau}(X_i) \bigr)
\\
&=& n^{-1} \sum_{R_i=1}
\bigl(Y_i-p_{\hat\tau}(X_i) \bigr)/g(X_i)
\\
&=& \hat{\th}_{HT} - \frac{1}{n}\sum
_{i=1}^n \frac{R_i}{g(X_i)}p_{\hat\tau}(X_i),
\end{eqnarray*}
where $\dot p_\tau$ is the derivative of $p_\tau$ with respect to $\tau$.
A~standard {B}ernstein--von {M}ises argument shows that $\hat\tau$ is within
$o_{P}(n^{-1/2})$ of the Bayesian estimator of $\tau$, thus $\hat{\th}^*_B$,
the Bayesian estimator of ${\th}^*$, satisfies
\begin{eqnarray*}
\hat{\th}^*_B &=& \frac{1}{N}\sum
_{i=1}^N p_{\hat\tau}(i) + o_{P}
\bigl(n^{-1/2} \bigr)
\\
&=& \frac{1}{n}\sum_{i=1}^n
\frac{R_i}{g(X_i)} p_{\hat\tau}(X_i) + O_{P}
\bigl(n^{-1/2} \bigr)
\\
&=& \hat{\th}_{HT}+ O_{P} \bigl(n^{-1/2} \bigr)
\end{eqnarray*}
(where $O_P$ and $o_P$ are evaluated under the population model).

The estimator presented in \cite{li-2010} is somewhat similar; however, his
estimator is inconsistent, in general, and consistent only if $\E
(Y|R=1)=\E Y$
(as, in fact, his simulations demonstrate).

With this structure, it is unclear how to define sets of $\bt$ on which
uniform convergence holds. This construction merely yields an estimator
equivalent to the nonparametric HT estimator.

This prior produces a good estimator of ${\th}^*$ but, for other functionals,
for example, $\E (Y|g(X) > a )$ or $\E (\bb'\bb )$,
the prior leads
to estimators
which are not even consistent. So, if we are stuck with the resulting posterior,
as a type II Bayesian would be, we have solved the specific problem
with which
we were faced at the cost of failing to solve other problems which may
come to
interest us.

\section{The Partial Linear Model}\label{partial}

In this section, we consider the partial linear model, also known as the
partial spline model, which was originally discussed in \cite{engle-1986};
see also \cite{schick-1986}.
In this case, we have observations $W_i = (X_i,U_i,Y_i)$ such that
%
\begin{equation}
\label{partiallinear} Y_i = \th X_i + \bt(U_i) +
\ve_i,
\end{equation}
where the $(X_i,U_i)$ form an \iid sample from a joint density
$p(x,u)$, relative to Lebesgue measure on the unit square,
$[0,1]^2$; $\bt$ is an element of some class of functions $\mathcal{B}$;
and the $\ve_i$ are \iid standard-normal. The parameter of
interest is $\th$ and $\bt$ is a (possibly very nonsmooth) nuisance parameter.
Let $\ga(U)=\E (X | U )$. For simplicity, assume that $U$ is known
to be uniformly distributed on the unit interval.

\subsection{The Frequentist Analysis}

Up to a constant, the log-likelihood function equals
\[
\ell(\th,h,p) = - \bigl(y-\th x-\bt(u) \bigr)^2/2 - \log p(x,u).
\]
It is straightforward to argue that the score function for $\th$,
the derivative of the log-likelihood in the least favorable direction
for estimating $\th$ (cf. \citeauthor{schick-1986}, \citeyear{schick-1986}; \citeauthor{bickel-etal-1998}, \citeyear{bickel-etal-1998}),
is given by
\[
\tilde\ell_{\th} (\th,h ) = \bigl(x-\ga(u) \bigr) \bigl(y- \th x -
\bt(u) \bigr) = \bigl(x-\ga(u) \bigr)\ve,
\]
and that the semiparametric information bound for $\th$ is
\[
I = \E \bigl[\operatorname{var}(X|U) \bigr].
\]
We assume that $I>0$ (which implies, in particular, that $X$ is not a
function of $U$). Regarding estimation of $\th$, intuition based on
(\ref{partiallinear}) says that for small neighborhoods of $u$, the
conditional expectation of $Y$ given $X$ is linear with intercept
$\bt(u)$, and slope $\th$ which does not depend on the neighborhood.
The efficient estimator should average the estimated slopes over all
such neighborhoods.

Indeed, under some regularity conditions, an efficient estimator can be
constructed along the following lines. Find initial estimators $\tilde
\ga$
and $\tilde\bt$ of $\ga$ and $\bt$, respectively, and estimate $\th$ by
computing
\[
\hat{\th}= \frac{\sum (X_i-\tilde\ga(U_i)
 )
 (Y_i-\tilde\bt(U_i) )}{
\sum (X_i-\tilde\ga(U_i)
 )^2}.
\]
The idea here is that $\th$ is the regression coefficient associated with
regressing $Y$ on $X$, conditioning on the observed values of $U$. In order
for this estimator to be $\sqrt{n}$-consistent (or minimax), we need to
assume that the functions $\ga$ and $\bt$ are smooth enough that we can
estimate them at reasonable rates.

We could, for example, assume that the functions $\bt$ and $\ga$ satisfy
H\"older conditions of order $\alpha$ and   $\delta$, respectively.
That is, there is a constant $0\leq C<\infty$ such that
$|\bt(u)-\bt(v)|\leq C|u-v|^\alpha$
and $|g(u)-g(v)|\le C|u-v|^\delta$ for all $u,v$ in the support of $U$.
We also need to assume that $\operatorname{var}(X|U)$ has a version which is
continuous in $u$.
In this case, it is proved in \cite{wang-etal-2011} that a necessary
and sufficient condition for the existence of a $\sqrt{n}$-consistent and
semiparametrically efficient estimator of $\th$ is that $\alpha+\delta>1/2$.

\subsection{The Type I Bayesian Analysis}

We assume that the type I Bayesian places independent priors on
$p(u,x)$, $\bt$ and $\th$, $\pi=\pi_p\times\pi_{\bt}\times\pi_{\th}$.
For example, the prior on the joint density may be a function of
the environment, the prior on the nonparametric regression function
might be a function of an underlying physical process, and
the third component of the prior might reflect our understanding
of the measurement engineering. We have already argued that such
assumptions are plausible. The log-posterior-density is then
given by
\begin{eqnarray*}
\hspace*{-4pt}&&-\sum_{i=1}^n \bigl(Y_i-
\th X_i - \bt(U_i) \bigr)^2/2 + \log
\pi_{\th}(\th)+ \log\pi_{\bt}(\bt)
\\
\hspace*{-4pt}&&\quad{}+\sum_{i=1}^n \log
p(U_i,X_i) + \log\pi_{p}(p) + A,
\end{eqnarray*}
where $A$ depends on the data only. Note that the posterior
for $(\th,\bt)$ does not depend on $p$. The type I Bayesian would use
the same estimator regardless of what is known about the
smoothness of $\ga$.

Suppose now that, essentially, it is only known that $\bt$ is H\"older
of order $\alpha$, while the range of $U$ is divided up into intervals
such that, on each of them, $\ga$ is either H\"older of order $\delta_0$
or of order $\delta_1$, with
\[
\alpha+\delta_0 < 1/2 < \alpha+\delta_1.
\]
A $\sqrt{n}$-consistent estimator of $\th$ can only make use of data from
the intervals on which $\ga$ is H\"older of order of $\delta_1$. The rest
should be discarded. \emph{Suppose these intervals are disclosed to the
statistician.} If the number of observations in the ``good'' intervals
is of the same order as $n$, then the estimator is still
$\sqrt{n}$-consistent. For a frequentist, there is no difficulty in
ignoring the nuisance intervals---$\th$ is assumed to be the same everywhere.
However, the type I Bayesian cannot ignore these intervals. In fact, his
\emph{posterior} distribution cannot contain any information on
which intervals are good and which are bad.

More formally, let us consider a discrete version of the partial linear
model. Let the observations be $Z_i=(X_{i1},X_{i2},Y_{i1},Y_{i2})$, with
$Z_1,\ldots,Z_n$ independent. Suppose
\begin{eqnarray*}
X_{i1} &\sim& N(\ga_i, 1),
\\
X_{i2} &\sim& N(\ga_i+\eta_i,1),
\\
Y_{i1} &=&\th X_{i1}+\bt_i+\ve_{i1},
\\
Y_{i2} &=&\th X_{i2}+\bt_i+\mu_i+
\ve_{i2},
\\
\ve_{i1},\ve_{i2} &\stackrel{\mathrm{i.i.d.}} {\sim}&
N(0,1),
\end{eqnarray*}
where $X_{i1}, X_{i2},\ve_{i1}, \ve_{i2}$ are all independent, while
$\ga_i,\eta_i, \bt_i$, and $\mu_i$ are unknown parameters. We assume
that under the prior $(\ga_1,\eta_1),\ldots, (\ga_n,\eta_n)$ are \iid
independent of $\th$ and the $(\bt_1,\mu_1),\ldots,(\bt_n,\mu_n)$ are
\iid
This model is connected to the continuous version, by considering
isolated pairs of observations in the model with values differing
by $O(1/n)$. The H\"older conditions become $\eta_i=O_{P}(n^{-\delta_i})$,
and $\mu_i=O_{P}(n^{-\alpha})$, where $\delta_i\in\{\delta_0,\delta_1\}$,
as above.

From a frequentist point of view, the $(X_{i1},X_{i2},Y_{i1},\allowbreak  Y_{i2})$
have a joint normal distribution and we would then consider the statistic
\[
\lleft[ \matrix{ X_{i2}-X_{i1}
\cr
Y_{i2}-Y_{i1} } \rright] %
\sim N\lleft(
\lleft[ \matrix{ \eta_i
\cr
\th\eta_i+
\mu_i } \rright] %
, %
\lleft[ \matrix{ 2 & 2
\th
\cr
2\th& 2\th^2+2 } \rright] %
 \rright).
\]
Now consider the estimator
\begin{eqnarray*}
\hat{\th} &=& \frac{\sum_{\delta_i=\delta_1} (X_{i2}-X_{i1}) (Y_{i2}-Y_{i1})}{
\sum_{\delta_i=\delta_1}(X_{i2}-X_{i1})^2}
\\
&=& \th+ \frac{\sum_{\delta_i=\delta_1} (X_{i2}-X_{i1}) (\ve_{i2}-\ve_{i1})}{
\sum_{\delta_i=\delta_1} (X_{i2}-X_{i1})^2}
\\
&&{}+ \frac{\sum_{\delta_i=\delta_1} (X_{i2}-X_{i1}) \mu_i}{
\sum_{\delta_i=\delta_1} (X_{i2}-X_{i1})^2}
\\
&=& \th+ O_{P} \bigl(n^{-1/2} \bigr) + R,
\end{eqnarray*}
where
\[
R = \frac{\sum_{\delta_i=\delta_1} \eta_i \mu_i}{
\sum_{\delta_i=\delta_1} (X_{i2}-X_{i1})^2} = o_{P} \bigl(n^{-1/2} \bigr),
\]
since $\alpha+\delta_1 > 1/2$.

Note that if the sum were over all pairs, and if the number of pairs
with $\delta_i=\delta_0$ is of order $n$, then the estimator would not
be $\sqrt{n}$-consistent, since now $\sqrt{n} R$  diverges, almost surely.
In general, this model involves $2n+1$ parameters and the
parameter of \mbox{interest} cannot be estimated consistently unless the
nuisance parameters can be ignored, at least, asymptotically.
However, these parameters can only be ignored if we consider the smooth
pairs---that is, those pairs for which $\alpha+ \delta_i > 1/2$, making
the connection between variability, here, and smoothness, in the first
part of this section. Of course, the information on which pairs to use
in constructing the estimator is unavailable to the type I Bayesian.

The type I Bayesian does not find any logical contradiction in this
failure. The parameter combinations on which the Bayesian estimator fails
to be $\sqrt{n}$-consistent have
negligible probability, {a priori}. He assumes that {a priori},
$\bt$ and $\ga$ are independent and short intervals are essentially
independent since $\bt$ and $\ga$ are very rough. Under these assumptions,
the intervals on which $\ga$ is H\"older of order $\delta_0$ contribute,
on average, $0$ to the estimator. There are no data in these intervals
that contradict this {a priori} assessment. Hence, assumptions, made
for convenience in selecting the prior, dominate the inference. The trouble
is that, as discussed in Appendix~\ref{correlation}, even if we assume {a priori}
that $\bt$ and $\ga$ are independent, their cross-correlation may be
nonzero with high probability, in spite of the fact that this
random cross-correlation has mean $0$.

\section{The White Noise Model and Bayesian Plug-In Property}
\label{plugin}

We now consider the white noise model in which we observe the process
\[
dX(t) = \bt(t) \,dt + n^{-1/2}\,dW(t),\quad t\in(0,1),
\]
where $\bt$ is an unknown $L_2$-function and $W(t)$ is standard Brownian
motion. This model is asymptotically equivalent to models in density estimation
and nonparametric regression; see \cite{nussbaum-1996}
and \cite{brown-low-1996}.
It is also clear that this model is equivalent to the model in which we
observe
%
\begin{eqnarray}
\label{wnmodel}  X_i = \bt_i + n^{-1/2}
\ve_i,
\nonumber
\\[-8pt]
\\[-8pt]
\eqntext{\ve_i\stackrel{\mathrm{i.i.d.}} {\sim}
N(0,1), i = 1, 2, \ldots,} 
\end{eqnarray}
where $X_i$, $\bt_i$ and $\ve_i$ are the $i$th coefficients in an
orthonormal (e.g., Fourier) series expansion of $X(t)$, $\bt(t)$ and
$W(t)$, respectively. Note that  the entire sequence $X_1,X_2,\ldots$ is
observed, and $n$ serves only as a scaling parameter. We are interested
in estimating
$\bb=  (\bt_1, \bt_2, \ldots )$ as an object in $\ell_2$ with the
loss function $\|\hat{\bb}-\bb\|^2$ and linear functionals
$\th= \ga(\bb) = \sum_{i=1}^\infty\ga_i\bt_i$ with
$(\ga_1, \ga_2, \ldots)\in\ell_2$, also under squared error loss.
From a standard frequentist point of view, estimation in this problem is
straightforward. Simple estimators achieving the optimal rate of convergence
are given in the following proposition.

\begin{proposition}\label{linearminimax}
Assume that $\bb\in\mathcal{B}_{\alpha} =
 \{\bb: |\bt_i|\leq i^{-\alpha} \}$ and $\alpha> 1/2$.
The estimator $\widehat{\th}=\sum g_i X_i$ is $\sqrt{n}$-consistent for any
$g\in\ell_2$ and the estimator
\[
\widehat\bt_i = %
\cases{ X_i, &
$i^{\alpha}\leq n^{1/2}$,
\cr
0, & $i^{\alpha} >
n^{1/2}$, } %
\]
achieves the minimax rate of convergence, $n^{-(2\alpha-1)/2\alpha}$.
\end{proposition}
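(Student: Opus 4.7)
The two claims are essentially independent, so I would treat them in turn. For the linear functional statement, $\th$ is well defined because $\bb\in\mathcal{B}_\alpha\subset\ell_2$ (using $\alpha>\half$), and by~\eqref{wnmodel},
\begin{equation*}
\widehat\th - \th \;=\; \sum_{i=1}^\infty \ga_i(X_i-\bt_i) \;=\; n^{-1/2}\sum_{i=1}^\infty \ga_i\ve_i.
\end{equation*}
Since $\bg\in\ell_2$, this is an almost-surely convergent centered Gaussian series with variance $n^{-1}\sum_i \ga_i^2$, so that $\sqrt{n}(\widehat\th - \th)\sim N(0, \|\bg\|^2)$. This gives $\sqrt{n}$-consistency uniformly on all of $\ell_2$, and in particular on $\mathcal{B}_\alpha$.

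For the thresholding estimator I would carry out the standard bias--variance decomposition. Let $N=N_n$ be the largest integer with $N^\alpha\le n^{1/2}$, so $N\asymp n^{1/(2\alpha)}$. Because $\widehat\bt_i-\bt_i = n^{-1/2}\ve_i$ for $i\le N$ and $-\bt_i$ otherwise,
\begin{equation*}
\E\,\|\widehat\bb-\bb\|^2 \;=\; \frac{N}{n} \;+\; \sum_{i>N}\bt_i^2 \;\le\; \frac{N}{n} \;+\; \sum_{i>N} i^{-2\alpha}.
\end{equation*}
The first term is of order $n^{-1}\cdot n^{1/(2\alpha)} = n^{-(2\alpha-1)/(2\alpha)}$, and the second, by integral comparison, is bounded by $(2\alpha-1)^{-1}N^{-(2\alpha-1)}$, which is of the same order. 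The cutoff $i^{\alpha}\le n^{1/2}$ is exactly the one that balances the two pieces, so $\sup_{\bb\in\mathcal{B}_\alpha}\E\,\|\widehat\bb-\bb\|^2 = O\!\left(n^{-(2\alpha-1)/(2\alpha)}\right)$.

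The remaining (and main) obstacle is the matching minimax lower bound. I would reduce to a hypercube sub-problem on the first $N$ coordinates: restrict to vectors with $\bt_i\in\{-\ve,\ve\}$ for $i\le N$ and $\bt_i=0$ otherwise, choosing $\ve\asymp N^{-\alpha}$ so all such vectors lie in $\mathcal{B}_\alpha$. Pick $N$ so that the per-coordinate Kullback--Leibler cost $n\ve^2\asymp n\cdot N^{-2\alpha}$ is $O(1)$; this forces $N\asymp n^{1/(2\alpha)}$, i.e., the same scale as in the upper bound. Because the coordinates in~\eqref{wnmodel} are independent Gaussians, one may apply Assouad's lemma (equivalently, $N$ independent two-point Le~Cam tests) to obtain a risk lower bound of order $N\ve^2\asymp n^{-(2\alpha-1)/(2\alpha)}$, matching the upper bound. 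This is the classical Ibragimov--Hasminskii/Pinsker calculation for hyperrectangles in the sequence model; the only delicate point is tuning the constants in the two-point tests so that the upper and lower bounds agree up to absolute constants, rather than merely having the same rate.
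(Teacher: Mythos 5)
Your proof is correct and follows essentially the same route as the paper: the identical bias--variance computation for the thresholding estimator, and a per-coordinate two-point perturbation for the lower bound. The only cosmetic difference is that you invoke Assouad on the first $N\asymp n^{1/(2\alpha)}$ coordinates with a constant perturbation $\ve\asymp N^{-\alpha}$, whereas the paper phrases the same idea as a Bayes-risk bound under the product prior $\Pi\left(\bt_i=\pm i^{-\alpha}\right)=1/2$ on all coordinates (and leaves the easy claim about $\widehat\th=\sum g_iX_i$, which you verify explicitly, unproved).
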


The proof is given in Appendix~\ref{proofs}.

\subsection{The failure of Type I Bayesian analysis}

A critical feature of Bayesian procedures for estimating linear functionals
is that they necessarily have the plug-in property (PIP). For example, for
squared error loss, since
\[
\E\ga(\widehat\bb) = \sum_{i=1}^n
\ga_i\E\widehat\bt_i,
\]
we have $\widehat{\ga(\bb)}=\ga(\widehat\bb)$, for any Bayesian estimators
of $\ga(\bb)$ and $\bb$ based on the same prior.

We say that $\widehat\bb$ is a \emph{uniformly efficient} plug-in
estimator for a set $\Theta$ of functionals and model $\mathcal{P}$ if
\[
\Bigl\{r_n^{-2}\|\widehat\bb- \bb\|_2^2
+ n\sup_{\th\in\Theta} \bigl(\th(\widehat\bb)-\th \bigr)^2
\Bigr\} = O_P(1),
\]
and $\widehat{\th}= \th(\widehat\bb)$ is semiparametrically efficient
for $\th$, where $r_n$ is the minimax rate for estimation of $\bb$.

\cite{bickel-ritov-2003} argued that there is no uniformly
efficient plug-in estimator in the white noise model when
$\Theta$ is large enough, for example, the set of all
bounded linear functionals. Every plug-in estimator fails
to achieve either the optimal \mbox{nonparametric} rate for estimating $\bb$
or $\sqrt{n}$-consistency as a plug-in-estimator (PIE) of at least one
bounded linear functional $\ga(\bb)$. The argument given in
\mbox{\citeauthor{bickel-ritov-2003}} (\citeyear{bickel-ritov-2003}) that no estimator with the PIP can
be uniformly efficient in the white noise model can be
refined slightly as follows.

We need the following lemma; the proof of which is given in
Appendix~\ref{proofs}.

\begin{lemma}\label{bayesbias}
Suppose $X\sim N(\bt,\sigma^2)$, $|\bt|\leq    a\leq\sigma$.
Let $\widehat\bt={\widehat\bt}(X)$ be the posterior mean when the prior
is $\pi$, assuming $\pi$ is supported on $[-a,a]$, and let $b_\bt$ be its
bias under $\bt$. Then $|b_\bt|+|b_{-\bt}| > 2(1-(a/\sigma)^2)|\bt|$.
In particular, if $\pi$ is symmetric about $0$, then
$|b_\bt| > (1-(a/\sigma)^2)|\bt|$.
\end{lemma}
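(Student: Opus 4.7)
The plan is to lower-bound the antisymmetric combination $|b_\bt - b_{-\bt}|$, after which the triangle inequality $|b_\bt| + |b_{-\bt}| \geq |b_\bt - b_{-\bt}|$ yields the first claim. Setting $D(s) := \E_s[\widehat\bt(X)]$, the elementary identity
\[
  b_\bt - b_{-\bt} \;=\; D(\bt) - D(-\bt) - 2\bt
\]
reduces the problem to showing $D(\bt) - D(-\bt) < 2\bt(a/\sigma)^2$ (for $\bt > 0$, without loss of generality).

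The main step is to differentiate $D$ in $s$. Differentiation under the integral, justified because $\widehat\bt$ is bounded and the Gaussian density $\phi_\sigma(\cdot-s)$ is smooth with rapidly decaying derivatives, gives
\[
  D'(s) \;=\; \sigma^{-2}\,\E_s\bigl[(X-s)\widehat\bt(X)\bigr] \;=\; \sigma^{-2}\,\mathrm{Cov}_s\bigl(X,\widehat\bt(X)\bigr).
\]
Applying Stein's Gaussian identity to the bounded smooth function $\widehat\bt$ rewrites this as $D'(s)=\E_s[\partial_x\widehat\bt(X)]$. A short direct calculation (Brown's identity for the posterior mean, obtained by differentiating $\widehat\bt(x)=\int t\,\pi(t\mid x)\,dt$ in $x$) gives $\partial_x\widehat\bt(x) = v(x)/\sigma^2$, where $v(x):=\mathrm{Var}(\bt\mid X=x)$ is the posterior variance. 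Hence $D'(s) = \sigma^{-2}\,\E_s[v(X)]$.

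Because the posterior is supported on $[-a,a]$, Popoviciu's inequality gives $v(x)\leq a^2$, so $D'(s)\leq (a/\sigma)^2$. The inequality is in fact strict: equality $v(x)=a^2$ forces the posterior at $x$ to be $\half(\delta_{-a}+\delta_a)$, which can hold only on a set of Lebesgue measure zero (as it fixes the likelihood ratio at $x$ to a single value), so $\E_s[v(X)] < a^2$ for every $s$. Integrating from $-\bt$ to $\bt$ yields $D(\bt)-D(-\bt) < 2\bt(a/\sigma)^2$, whence
\[
  |b_\bt - b_{-\bt}| \;=\; 2\bt - \bigl(D(\bt) - D(-\bt)\bigr) \;>\; 2\bt\bigl(1-(a/\sigma)^2\bigr),
\]
and the triangle inequality finishes the general claim. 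For the symmetric case, symmetry of $\pi$ forces $\widehat\bt(-x)=-\widehat\bt(x)$ and hence $b_{-\bt} = -b_\bt$, so $|b_\bt|=|b_{-\bt}|$ and the bound halves to $|b_\bt| > (1-(a/\sigma)^2)|\bt|$.

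The main obstacle is not any single step --- differentiation under the integral, Stein's lemma, and Brown's identity for the posterior mean are all routine given boundedness of $\widehat\bt$ and smoothness of the Gaussian --- but rather ensuring the strictness of the inequality, which requires the mild observation that the posterior cannot equal the two-point distribution $\half(\delta_{-a}+\delta_a)$ on a set of positive measure in $x$.
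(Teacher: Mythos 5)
Your proof is correct and follows essentially the same route as the paper's: both identify the derivative of the mean of $\widehat\bt$ in the true parameter with $\sigma^{-2}\E_\th[\mathrm{var}(\Theta\,|\,X)]$ (you via Stein's and Brown's identities, the paper by direct differentiation), bound the posterior variance by $a^2$, and conclude by the mean value theorem / integration plus the triangle inequality. Your added remark on strictness is a small refinement the paper glosses over, but it does not change the substance of the argument.
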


This lemma shows that any Bayesian estimator is necessarily biased and puts
a lower bound on this bias. We use this lemma to argue that any
Bayesian estimator
will fail to yield $\sqrt{n}$-consistent estimators for at least one linear
functional.

\begin{theorem}\label{th4.3}
For any Bayesian estimator $\widehat\bb$ with\hspace*{-0.5pt} respect\hspace*{-0.5pt} to\hspace*{-0.5pt} prior\hspace*{-0.5pt} $\pi$\hspace*{-0.5pt} supported\hspace*{-0.5pt}
on\hspace*{-0.5pt} $\mathcal{B}_\alpha$\hspace*{-0.5pt}, with\hspace*{-0.5pt} $\alpha\hspace*{-0.5pt}>\hspace*{-0.5pt}1/2$, there is a pair
$(\ga,\bb)\in \ell_2\times\mathcal{B}_\alpha$ such that
$n [\ga(\widehat\bb)-\break \ga(\bb) ]^2\stackrel{p}{\rightarrow
}\infty$.
In fact, $\lim\inf_{n\rightarrow\infty}n^{(2\alpha-1)/4\alpha} [
\E_{\bb}\ga(\widehat\bb) -\break   \ga(\bb) ] > 0$.
\end{theorem}

\begin{pf}
It follows from Lemma~\ref{bayesbias} that for any $i>2n^{1/2\alpha}$
there are $\bt_i$ such that if $b_i=\E\widehat\bt_i-\bt_i$ then
$|b_i|>3i^{-\alpha}/4$. Define
\[
\ga_i = %
\cases{ 
0, & $i\leq2n^{1/2\alpha}$,
\cr
C n^{(2\alpha-1)/4\alpha} i^{-\alpha}, & $i > 2n^{1/2\alpha},
b_i > %
i^{-\alpha}/2$,
\cr
-C
n^{(2\alpha-1)/4\alpha} i^{-\alpha}, & $i > 2n^{1/2\alpha}, b_i <
-i^{-\alpha}/2$,} %
\]
where $C$ is such that $\sum_{i=1}^\infty\ga_i^2 = 1$. (Note that $C$
is bounded
away from $0$ and $\infty$.) We have
\begin{eqnarray*}
\E \Biggl[\sum_{i=1}^{\infty}\ga_i
(\widehat\bt_i - \bt_i ) \Biggr] &\geq& 3 C
n^{ (2\alpha-1)/4\alpha} \sum_{i > 2n^{1/2\alpha}} i^{-2\alpha}/4
\\
&\geq& 3 C n^{-(2\alpha-1)/4\alpha}/4.
\end{eqnarray*}
\upqed\end{pf}

Thus, any Bayesian estimator will fail to achieve optimal rates on some pairs
$(\bg,\bb)$. These pairs are not unusual. Actually they are pretty
``typical'' members of $\ell_2\times\mathcal{B}_\alpha$. In fact, for
any Bayesian
estimator $\widehat\bb$ and for almost all $\bb$ with respect to the
distribution with independent uniform coordinates on $\mathcal{B}_\alpha
$, there is a $\bg$ such that
$\ga(\widehat\bb)$ is inconsistent and asymptotically biased,
as in the theorem. Formally, let $\mu$ be a probability measure such that
the $\bt_i$ are independent and uniformly distributed on
$[-i^{-\alpha},i^{-\alpha}]$. Then, for any sequence of Bayesian estimators,
$\{{\widehat\bb}_n\}$,
\begin{eqnarray*}
&&\liminf_{n\rightarrow\infty} \mu \Bigl\{\bb: \sup_{\bg\in\ell_2}
n^{(2\alpha-1)/4\alpha} \bigl[\E_{\bb}\ga ({\widehat\bb}_n )
\\
&&\phantom{\liminf_{n\rightarrow\infty} \mu \Bigl\{\bb: \sup_{\bg\in\ell_2}
n^{(2\alpha-1)/4\alpha} \bigl[}{}-\ga(
\bb) \bigr] > M \Bigr\} = 1,
\end{eqnarray*}
for some $M > 0$. This statement follows from the proof of the Theorem~\ref{th4.3}, noting
that $\mu \{|b_i| > i^{-\alpha}/2 \} > 1/2$.

What makes the pairs that yield inconsistent estimators special, is
only that
the sequences $\bt_1, \bt_2, \ldots$ and $\ga_1, \ga_2, \ldots$ are nonergodic.
Each of them has a nontrivial autocorrelation function, and the two
autocorrelation functions are similar (see Appendix~\ref{correlation}).
The prior suggests that such pairs are unlikely and, therefore, that
the biases
of the estimators of each component cancel each other out. If the prior
distribution
represents a real physical phenomenon, this exact cancelation might be reasonable
to assume,
by the law of large numbers, and the statistician should not worry
about it.
If, on the other hand, the prior is a way to express ignorance or subjective
belief, then the analyst should worry about these small biases. This is
particularly true if the only reason for assuming that these small biases
are not going to accumulate is mathematical convenience. Indeed, in
high-dimensional spaces, auto-correlation functions may be complex,
with unknown neighborhood structures which are completely hidden
from the analyst.

We consider a Bayesian model to be \textit{honestly nonparametric} on
$\mathcal{B}_\alpha$, if the distribution of $\bt_i$, given $X_{-i}$,
is symmetric around $0$, and
$P(\bt_i>\epsilon i^{-\alpha} | X_{-i})>\epsilon$, for some $\epsilon>0$,
where $X_{-i}=X_1,\ldots,\break X_{i-1}, X_{i+1},\ldots$\,. That is, at least
in some
sense, all the components of $\beta_i$ are free parameters. In this case,
we have the following.

\begin{theorem}\label{the4.4}
Let the prior $\pi$ be honestly nonparametric on $\mathcal{B}_\alpha$ and
$1/2<\alpha<3/4$. Suppose $\bg=(\ga_1,\ga_2,\ldots) \in\mathcal
{B}_\alpha$, and
$\limsup\sqrt{n}\llvert \sum_{i={\nu n^{1/2\alpha}}}^\infty\ga_i\bt
_i\rrvert =\infty$
for some $\nu>1$. Then the Bayesian estimator of
$\ga(\bb)=\sum_{i=1}^\infty g_i\beta_i$
is not $\sqrt{n}$-consistent.
\end{theorem}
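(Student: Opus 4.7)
The plan is to combine the Bayesian plug-in property (PIP) of Section~\ref{plugin} with a conditional application of Lemma~\ref{bayesbias}, showing that the frequentist bias of $\ga(\widehat\bb)$, scaled by $\sqrt{n}$, has $\limsup=\infty$. By the PIP, the Bayes estimator of $\ga(\bb)$ equals $\ga(\widehat\bb)=\sum_i \ga_i\widehat\bt_i$ with $\widehat\bt_i=\E_\pi[\bt_i\mid X]$, so
\begin{equation*}
  B_n \;:=\; \E_\bb\!\left[\ga(\widehat\bb)\right]-\ga(\bb) \;=\; \sum_i \ga_i b_i, \qquad b_i:=\E_\bb[\widehat\bt_i]-\bt_i.
\end{equation*}
Because $\widehat\bt_i\approx X_i$ for $i\ll n^{1/2\alpha}$ and $\widehat\bt_i\approx 0$ for $i\gg n^{1/2\alpha}$, the frequentist variance $\mathrm{Var}_\bb(\ga(\widehat\bb))$ is $O(1/n)$, so by Chebyshev's inequality it suffices to show $\sqrt{n}|B_n|$ has $\limsup=\infty$.

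The core step is the tail analysis. For $i>\nu n^{1/2\alpha}$, condition on $X_{-i}:=(X_j)_{j\ne i}$. The honestly non-parametric hypothesis says the conditional law of $\bt_i$ given $X_{-i}$ is symmetric about $0$ and supported on $[-i^{-\alpha},i^{-\alpha}]$, and with $\sigma=n^{-1/2}$ the ratio $a/\sigma=i^{-\alpha}\sqrt{n}<\nu^{-\alpha}<1$, so Lemma~\ref{bayesbias} applies conditionally. Taylor-expanding the posterior-mean integral in this small-$a/\sigma$ regime gives
\begin{equation*}
  \widehat\bt_i \;=\; nX_i\,v_i(X_{-i})\;+\;\text{(higher order)},\qquad v_i(X_{-i}):=\E_\pi[\bt_i^2\mid X_{-i}]\in[0,i^{-2\alpha}].
\end{equation*}
Taking frequentist expectation, using $\E_\bb X_i=\bt_i$ and the independence of $X_i$ from $X_{-i}$ under $\bb$, yields $b_i=-c_i\bt_i(1+o(1))$ with $c_i:=1-n\,\E_\bb v_i(X_{-i})\in[1-\nu^{-2\alpha},1]$, and $c_i\to 1$ as $i/n^{1/2\alpha}\to\infty$. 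Splitting,
\begin{equation*}
  \sum_{i>\nu n^{1/2\alpha}}\ga_i b_i\;=\;-\sum_{i>\nu n^{1/2\alpha}}\ga_i\bt_i\;+\;\sum_{i>\nu n^{1/2\alpha}}(1-c_i)\ga_i\bt_i\;+\;o(\cdot),
\end{equation*}
and the first piece times $\sqrt{n}$ has $\limsup=\infty$ by hypothesis.

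The hardest step is the head-versus-tail comparison: ruling out that the correction $\sum(1-c_i)\ga_i\bt_i$ together with the head contribution $\sum_{i\le\nu n^{1/2\alpha}}\ga_i b_i$ cancels or dominates the tail main piece. The correction satisfies $|1-c_i|\le\min(\nu^{-2\alpha},n/i^{2\alpha})$; the head is negligible for $i\ll n^{1/2\alpha}$, where the likelihood dominates and $b_i$ is small, and is controlled by the transition region $i\sim n^{1/2\alpha}$, whose total is bounded by $O(n^{(1-2\alpha)/2\alpha})$. The restriction $1/2<\alpha<3/4$ enters precisely here, ensuring these are asymptotically dominated by the tail growth along the subsequence realizing $\limsup=\infty$. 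I would complete the argument by averaging or slightly shifting the cutoff $\nu n^{1/2\alpha}$ to smooth boundary effects, converting the hypothesized tail growth into a quantitative lower bound on $|B_n|$; Chebyshev then finishes.
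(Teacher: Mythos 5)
Your proposal follows essentially the same route as the paper's proof: the plug-in property reduces everything to the bias $\sum_i\ga_i b_i$, and Lemma~\ref{bayesbias}, applied conditionally on $X_{-i}$ with $a/\sigma=i^{-\alpha}\sqrt{n}\le\nu^{-\alpha}<1$ on the tail $i>\nu n^{1/2\alpha}$, yields exactly the paper's identity $\E\widehat\bt_i-\bt_i=-(1-d_i)\bt_i$ with $0\le d_i\le n i^{-2\alpha}$, followed by the same triangle-inequality split into the hypothesized divergent term and a correction. The steps you flag as hardest --- controlling the head $i\le\nu n^{1/2\alpha}$ and the frequentist variance before invoking Chebyshev --- are not addressed in the paper's proof at all, which consists solely of the tail-bias display; so you have reproduced the published argument, and your remaining concerns are gaps the paper leaves implicit rather than steps you failed to match.
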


Note that if the last condition is not satisfied, then an estimator
that simply
ignores the tails ($i > n^{1/2\alpha}$) could be $\sqrt{n}$-consistent. However,
for $\bg,\bb\in\mathcal{B}_\alpha$, in general, all the first
$ n^{1/(4\alpha-2)}$ terms must be used, a number which is much greater
than $n^{1/2\alpha}$ for $\alpha$ in the range considered.

\begin{pf*}{Proof of Theorem~\ref{the4.4}}
Again, we consider the bias, as in the second part of Lemma~\ref{bayesbias}.
Under our assumptions, we have
\begin{eqnarray*}
&&\sqrt{n}\biggl\llvert \sum_{i > \nu n^{1/2\alpha}}\ga_i
(\E{\widehat\bt}_i -\bt_i )\biggr\rrvert
\\
&&\quad= \sqrt{n}\biggl\llvert \sum_{i > \nu n^{1/2\alpha}}(1-d_i)
\ga_i\bt_i\biggr\rrvert\quad \bigl(0\leq d_i\leq
n i^{-2\alpha}\bigr)
\\
&& \quad\geq\sqrt{n}\biggl\llvert \sum_{i > \nu n^{1/2\alpha}}
\ga_i\bt_i\biggr\rrvert - \sqrt{n}\sum
_{i > \nu n^{1/2\alpha}}n\llvert \ga_i\bt_i\rrvert
i^{-2\alpha
}
\\
&& \quad\geq\sqrt{n}\biggl\llvert \sum_{i > \nu n^{1/2\alpha}}
\ga_i\bt_i\biggr\rrvert - \sqrt{n}\sum
_{i > \nu n^{1/2\alpha}}n i^{-4\alpha}
\\
&& \quad= \sqrt{n}\biggl\llvert \sum_{i > \nu n^{1/2\alpha}}
\ga_i\bt_i\biggr\rrvert - o(1).
\end{eqnarray*}
\upqed\end{pf*}

Note that the assumptions of the theorem are natural if the prior
corresponds to
the situation in which the $\bt_i$ tend to $0$ slowly, so that we need
essentially
all the available observations to estimate $\ga(\bb)$ at the $\sqrt
{n}$-rate. As
in the last two examples, if either $\bt_i$ or $\ga_i$ converges to $0$ quickly
enough---that is, $\bt$ or $\ga$ are smooth enough---then the difficulty
disappears, as the tails do not contribute much to the functional $\ga
(\bb)$
and they can be ignored. However, when the prior is supported on
$\mathcal{B}_\alpha$, then the estimator ${\widehat\bt}_i = X_i$ is unavailable
to the Bayesian (whatever the prior) and $\ga(\bb)$ cannot be estimated
at the
minimax rate with $\bg\in\mathcal{B}_\alpha$, much less $\ell_2$.

\subsection{Type II Analysis}

It is easy to construct priors which give the global and local minimax rates
separately. For the nonparametric part $\bb$, one can select a prior for
which the $\bt_i$ are independent and the estimator of $\bt_i$ based on
$X_i\sim N(\bt_i,n^{-1})$ with $\bt_i$ restricted to the interval
$[-i^{-\alpha},i^{-\alpha}]$ is minimax; see \cite{bickel-1981}.
For the parametric part, one can use an improper prior under which the
$\bt_i$ are independent and uniformly distributed on the real
line. This prior works, but it completely ignores the constraints on the
coordinates of $\bb$. If one permits priors which are not supported on the
parameter space, then this prior is perfect, in the sense that any linear
functional can be estimated at the minimax rate.

If we are permitted to work with a prior which is not supported by the parameter
space, then we can construct a prior which yields good estimators for
both $\bb$
and any \emph{particular} linear functional. Indeed, suppose that $\ga
_i\neq0$,
infinitely often, and change bases so that $\tilde X = B'X$, where $B$
is an
orthonormal basis for $\ell_2$ with first column equal to $\bg/\|\bg\|
$. Note
that $\tilde X_1 = \sum_{j=1}^{\infty}\ga_j X_j / \|\bg\|$ and the
$\tilde X_i$
are independent, with $\tilde X_i\sim N (\tilde{\bt}_i,n^{-1} )$,
$i=0,1,\ldots$\,, where $\tilde{\bt}_1$ is the parameter of interest, and
$\|\tilde{\bb}\|_2=\|\bb\|_2$. Thus,\vspace{2pt} a Bayesian who places a flat
prior on $\th= \tilde{\bt}_1$ and a standard nonparametric prior on the
other coordinates of $\tilde{\bb}$, such that $\tilde{\bt}_i$ is estimated
by $\tilde X_i$, properly thresholded, will be able to estimate $\th$
efficiently and $(\tilde{\bt}_2, \tilde{\bt}_3, \ldots)$ at the minimax
rate, simultaneously; cf. \cite{zhao-2000}. Of course, this prior was
tailor-made for the specific functional $\th= \ga(\bb)$ and would yield
estimators of other linear functionals which are not $\sqrt{n}$-consistent,
should the posterior be put to such a task.

\begin{figure*}[b]

\includegraphics{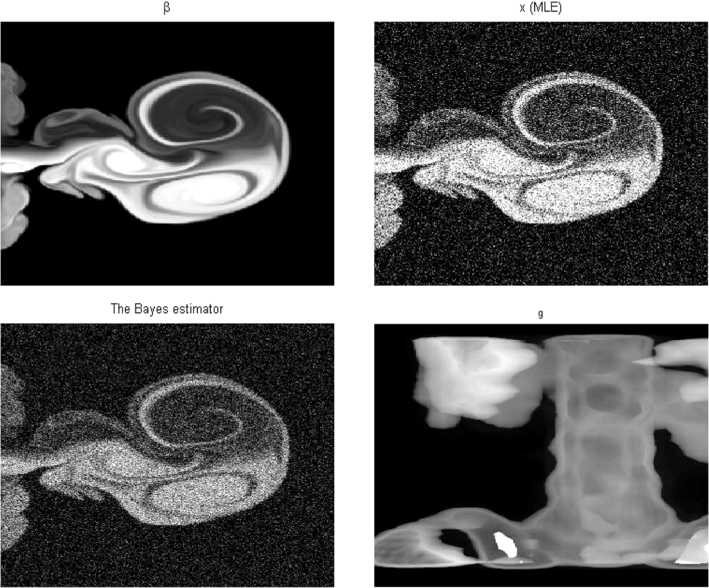}

\caption{Estimating linear functionals: \textup{(a)} the vector $\bb$; \textup{(b)} the
observations $X$; \textup{(c)} the Bayesian estimator; \textup{(d)} the functional~$g$.}\label{images}
\end{figure*}

\subsection{An Example}

To demonstrate that the effects described above have real, practical
consequences, consider the following example. Take $\bb= \operatorname{vec}(M_0)$
and $\bg= \operatorname{vec}(M_1)$, where $M_0$ and $M_1$ are the two images shown
in Figure~\ref{images}(a) and~(d), respectively. That is, each image
is represented by the matrix of the gray scale levels of the pixels,
and $\operatorname{vec}(M)$ is the vector obtained by piling the columns of $M$
together to obtain a single vector. These images were
sampled at random from the images which come bundled in the standard
distribution of \texttt{Matlab}. The images have been modified slightly,
so they both have the same $367\times300$ geometry, but nothing else
has been done to them. To each element of $\bb$, we added an independent
$N(0,169)$ random variable. This gives us $X$, shown in Figure~\ref{images}(b).
Let $\pi$ be that prior which takes the $\bt_i$ \iid $N(\mu,\tau^2)$,
where $\mu= \sum w_i\bt_i/\sum w_i$, with $w_i$ independent and identically
uniformly-distributed on $(0,1)$ and
$\tau^2 = 315.786$, the true empirical variance of the $\bt_i$.
The resulting nonparametric Bayesian estimator is shown in Figure~\ref{images}(c). The mean squared error (MSE) of this Bayesian estimator is
approximately
$65\%$ smaller than that of the MLE. Now consider the functional
defined by
$\bg$, shown in Figure~\ref{images}(d). Applying $\ga$ to $X$ yields an
estimator with root mean squared error (RMSE) of $1.04$, but plugging-in
the much cleaner Bayesian estimator of Figure~\ref{images}(c) gives an estimator
with a RMSE of $19.01$, almost twenty times worse than the frequentist
estimator.
Of course, the biggest difference between these two estimators is
bias: $0.01$ for the frequentist versus $19.00$ for the Bayesian.
These RMSE calculations were based on $1000$ Monte Carlo simulations.

There is no reason to suspect that these images are correlated---they
were sampled at random from an admittedly small collection of images---and they are certainly unrelated, one image shows the results of an
\mbox{astrophysical} fluid jet simulation and the other is an image of the
lumbar spine, but neither is permutation invariant nor ergodic, and
this implies that the two images may be strongly positively or
negatively correlated, just by chance; see Figure~\ref{corr} and
Appendix~\ref{correlation}.


\section{Estimating the Norm of a High-Dimensional Vector}\label{norm}

We continue with our analysis of the white noise model,
but we consider a different, nonlinear Euclidean parameter
of interest: $\th=\sum_{i=1}^\infty\bt_i^2$.

\begin{figure*}

\includegraphics{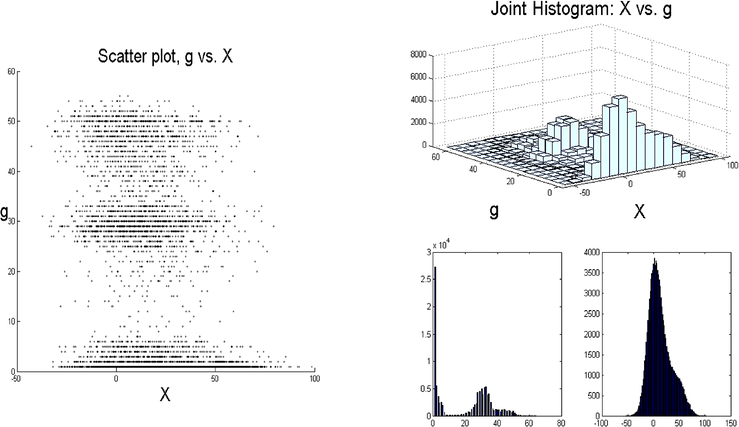}

\caption{A scatter plot and histograms of the data $X$ and functional
$g$. \textup{(a)} A scatter plot of $5\%$ of all pairs, chosen at random.
\textup{(b)} Joint and marginal histograms.}\label{corr}
\end{figure*}

A natural estimator of $\beta_i$ is given in Proposition~\ref{linearminimax}, and one may consider a plug-in estimator
of the parameter, given by
$\tilde{\th}=\sum\tilde\bt_i^2=\sum_{i<n^{1/2\alpha}}X_i^2$. This
estimator achieves the minimax rate for estimating $\bb$ and
$\tilde{\th}$ is an efficient estimator of the Euclidean parameter,
so long as $\alpha> 1$. But $\tilde\bt^2_i$ has bias $1/n$ as an
estimator of $\bt_i^2$. Summing $i$ from $1$ to $n$, we see that
the total bias is $n^{-1+1/2\alpha}$, which is much larger than
$n^{-1/2}$ if $\alpha< 1$. The traditional solution to this problem
is to simply unbias the estimator; cf. \cite{bickel-ritov-1988}.

\begin{proposition}\label{unbiased}
Suppose $3/4 < \alpha< 1$, then an efficient estimator of $\th$
is given by
%
\begin{equation}
\label{ub} \widehat{\th}= \sum_{i\leq m}
\bigl(X_i^2 - n^{-1} \bigr),
\end{equation}
for $n^{1/(4\alpha-2)} < m\leq n$.
\end{proposition}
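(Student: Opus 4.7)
The plan is to expand $\widehat\th-\th$ into three explicit pieces and show that one piece governs the asymptotic distribution while the other two are negligible for any $m$ in the stated range. Substituting $X_i=\bt_i+n^{-1/2}\ve_i$ into \eqref{ub} gives the decomposition
\begin{equation*}
\widehat\th-\th \;=\; \underbrace{\frac{2}{\sqrt n}\sum_{i\leq m}\bt_i\ve_i}_{S_1}
\;+\;\underbrace{\frac{1}{n}\sum_{i\leq m}(\ve_i^2-1)}_{S_2}
\;-\;\underbrace{\sum_{i>m}\bt_i^2}_{B}.
\end{equation*}
The target is to show $\sqrt n\,(\widehat\th-\th)\Rightarrow N(0,4\th)$, which is the semiparametric efficiency statement (the efficient information for $\|\bb\|^2$ under the white noise model is $1/(4\|\bb\|^2)$, matched by the asymptotic variance of $\sqrt n\,S_1$).

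First I would handle the deterministic truncation bias $B$. Using $\bb\in\mathcal{B}_\alpha$, i.e.\ $|\bt_i|\leq i^{-\alpha}$, we get $B\leq\sum_{i>m}i^{-2\alpha}=O(m^{-(2\alpha-1)})$. The lower bound $m>n^{1/(4\alpha-2)}$ in the hypothesis is precisely what forces $\sqrt n\,B=o(1)$, since $\sqrt n\, m^{-(2\alpha-1)}\to 0$ iff $m\gg n^{1/(4\alpha-2)}$. This is why this exponent appears and where the assumption $\alpha>3/4$ first bites, as it keeps $1/(4\alpha-2)<1$ so that an admissible range for $m$ exists at all.

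Next I would deal with the quadratic noise term $S_2$. Since the $\ve_i$ are i.i.d.\ standard normal, $\mathrm{Var}(S_2)=2m/n^2$, so $\sqrt n\,S_2=O_P(\sqrt{m/n})$. For strict efficiency one should pick $m$ with $m=o(n)$ inside the allowed range (always possible by the previous paragraph's observation that $n^{1/(4\alpha-2)}\ll n$ when $\alpha>3/4$); then $\sqrt n\,S_2=o_P(1)$. If one is willing to allow $m$ up to $n$, $S_2$ contributes only an inflated constant to the variance, so the $\sqrt n$-rate still holds. Finally, the leading term $S_1$ is a sum of independent mean-zero variables with total variance $4n^{-1}\sum_{i\leq m}\bt_i^2\to 4n^{-1}\th$. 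The Lindeberg condition is easily checked: each summand is bounded by $C\,n^{-1/2}i^{-\alpha}|\ve_i|$, and with $\alpha>1/2$ and $\ve_i$ Gaussian, the Lindeberg ratio vanishes. Hence $\sqrt n\,S_1\Rightarrow N(0,4\th)$.

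Combining the three pieces by Slutsky's lemma yields $\sqrt n\,(\widehat\th-\th)\Rightarrow N(0,4\th)$, establishing efficiency. The only delicate point, which I'd call the main obstacle, is the bias-variance balance that pins down the range of admissible $m$: the truncation bias forces $m$ large while the $\chi^2$-type fluctuations $S_2$ and (implicitly, if one tried to use all $m$ up to $n$ without the $n^{-1}$ centering) the bias correction force $m$ small. Verifying that the interval $\bigl(n^{1/(4\alpha-2)},n\bigr]$ is non-empty is where the assumption $\alpha>3/4$ is essential, and then the rest is a straightforward CLT together with a deterministic tail-sum estimate over $\mathcal{B}_\alpha$.
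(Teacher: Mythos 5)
Your proof is correct and follows essentially the same route as the paper's: bound the truncation bias $\sum_{i>m}\bt_i^2=O(m^{-(2\alpha-1)})=o(n^{-1/2})$ using $m>n^{1/(4\alpha-2)}$, compute the variance $4n^{-1}\sum_{i\le m}\bt_i^2+2m n^{-2}$, and conclude efficiency from asymptotic normality together with the information bound $1/(4\th)$. Your explicit $S_1,S_2,B$ decomposition and the observation that one should take $m=o(n)$ to kill the $2m/n^{2}$ term (which the paper's one-line variance bound silently absorbs into its $o_P(n^{-1})$) is a small but genuine sharpening, not a different method.
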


\begin{pf}
Clearly, the bias of the estimator is bound\-ed by
\[
\sum_{i>m}i^{-2\alpha} < m^{-(2\alpha-1)} =
o_{P}\bigl(n^{-1/2}\bigr),
\]
and its variance is bounded by
\[
n^{-1}\sum_{i\leq m}\bigl(4
\bt_i^2+2/n\bigr)=4\th n^{-1}+o_{P}
\bigl(n^{-1}\bigr),
\]
demonstrating $\sqrt{n}$-consistency. The estimator is efficient since
$\hat{\th}$ is asymptotically normal, and $1/4\th$ is the semiparametric
information for the estimation of~$\th$.
\end{pf}

This is a standard frequentist approach: there is a problem and the
solution is justified because it works---it produces an asymptotically
efficient estimator of the parameter of interest---not because it fits
a particular paradigm. The difficulty with the naive, plug-in estimator
$\sum_{i\leq m}\hat\bt_i^2=\sum_{i\leq m} X_i^2$ is that it is biased,
but this is a problem that is easy to correct. Of course, this simple
fix is not available to the Bayesian, as we show next.

\subsection{The Bayesian Analysis: An Even Simpler Model}\label{simple}

We start with a highly simplified version of the white noise model.
To avoid confusion, we change notation slightly and consider
%
\begin{eqnarray}
\label{remodel} &Y_1, \ldots, Y_k \mbox{ independent with
} Y_i\sim N\bigl(\mu_i,\sigma^2\bigr),&
\\
&\th= \th(\mu_1,\ldots,\mu_k;g_1,
\ldots,g_k) = \displaystyle\sum_{i=1}^k
g_i\mu_i^2,&
\end{eqnarray}
where the $g_i$ are known constants. Here, we consider the asymptotic
performance of estimators of $\th$ with $\sigma^2 = \sigma
_k^2\rightarrow0$
as $k\rightarrow\infty$. Let
\[
\widehat{\th}= \sum_{i=1}^k
g_i \bigl(Y_i^2 -\sigma^2
\bigr).
\]
Clearly,
\[
\E\widehat{\th}= \th,\quad \operatorname{var}\widehat{\th} = 4
\sigma^2\sum_{i=1}^k
g_i^2\mu_i^2 + 2
\sigma^4\sum_{i=1}^k
g_i^2.
\]

Suppose that the $\mu_i$ are  {a priori} \iid $N(0,\tau^2)$,
with $\tau^2=\tau_k^2$ known, and consider the situation in which
$g_1\sim\cdots\sim g_k$. If $ k^{-1/2} \sigma_k^2\ll\tau_k^2\ll
\sigma_k^2$, then the signal-to-noise ratio $\tau^2/\sigma^2$
is strictly less than 1 and no estimator of $\mu_i$ performs much
better than simply setting $\widehat\mu_i = 0$. On the other hand,
$\widehat{\th}$ remains a good estimator of $\th$, with coefficient
of variation, $O  (\sqrt{k}\sigma^2/k\tau^2 )$, converging
to $0$. We call this paradoxical regime the \emph{nonlocalizable}
range, as we can estimate global parameters, like $\th$, but not
the local parameters, $\mu_1,\ldots,\mu_k$.

 {A posteriori}, the $\mu_i\sim N (\tau^2 Y_i/(\sigma^2 +\tau^2),
\tau^2\sigma^2/\allowbreak  (\sigma^2 +\tau^2) )$ and the Bayesian estimator of
$\th$ is given by
\begin{eqnarray*}
\sum_{i=1}^k g_i\E \bigl(
\mu_i^2 | Y_i \bigr) &=& \frac{\sigma^4+2\tau^2\sigma^2}{(\sigma^2+\tau^2)^2}
\sum_{i=1}^k g_i^2
\tau^2
\\
&&{}+ \frac{\tau^4}{(\sigma^2+\tau^2)^2}\sum_{i=1}^k
g_i \bigl(Y_i-\sigma ^2 \bigr).
\end{eqnarray*}
This expression has the structure of a Bayesian estimator in exponential
families: a weighted average of the prior mean and the unbiased estimator.
If the signal-to-noise ratio is small, $\tau^2\ll\sigma^2$, almost all
the weight is put on
the prior. This is correct, since the variance of $\th$, under the prior,
is much smaller than the variance of the unbiased estimator.
So, if we really believe the prior, the data can be ignored at little cost.
However, in frequentist terms, the estimator is severely biased and,
for a type II Bayesian, nonrobust.

The Achilles heel of the Bayesian approach is the~plug-in property.
That is, $\E (\sum_{i=1}^m\mu_i^2|\mbox{data} ) =    \sum_{i=1}^m
\E (\mu_i^2|\mbox{data} )$. However, when the signal-to-noise
ratio is infinitesimally
small, any Bayesian estimator must employ shrinkage. Note that, in particular,
the unbiased estimator $Y_i^2 - \sigma^2$ of $\mu_i^2$ cannot be Bayesian,
because it is likely to be negative and is an order of magnitude larger
than $\mu_i^2$.

A ``natural'' fix to the nonrobustness of the \iid prior, is to introduce
a hyperparameter. Let $\tau^2$ be an unknown parameter, with some
smooth prior. Marginally, under the prior, $Y_1,\ldots,Y_k$ are \iid
$N(0,\sigma^2+\tau^2)$. By standard calculations, it is easy to see that
the MLE of $\tau^2$ is
$\widehat\tau^2 = k^{-1}\sum_{i=1}^k (Y_i^2 - \sigma^2 )$.
By the Bernstein--von Mises theorem, the Bayesian estimator of $\tau^2$ must
be within $o_P(k^{-1/2})$ of $\widehat\tau^2$.
If $g_1 = \cdots= g_k$ and we plug $\tau= \widehat\tau$ into the
formula for
the Bayesian estimator, we get a weighted average of two estimators of
$\th$, both
of which are equal to $\widehat{\th}$. But, in general, $\widehat\tau$ is strictly
different from $\widehat{\th}$ and this estimator is inconsistent. Of
course, the
Bayesian estimator is not obtained by plugging-in the estimated value
of $\tau$,
but the difference would be small here, and the Bayesian estimator
would perform
poorly.

We can, of course, select the prior
so that the marginal variance is directly relevant to estimating $\th$. One
way to do this is to assume that $\tau^2$ has some smooth prior and, given
$\tau^2$, the $\mu_i$ are \iid $N(0,\allowbreak (\tau^2/g_i)-\sigma^2)$. Then
$Y_i\sim N(0,\tau^2/g_i)$, marginally, and the marginal log-likelihood
function is
\[
-k\log\bigl(\tau^2\bigr)/2 - \sum_{i=1}^k
g_i Y_i^2/2\tau^2.
\]
In this case, $\widehat\tau^2 = k^{-1}\sum_{i=1}^k g_i Y_i^2$ and
the posterior mean of $\sum_{i=1}^k g_i\mu_i^2$ is approximately
$\sum_{i=1}^k g_i (\widehat\tau^2/g_i -\sigma^2 ) =
\sum_{i=1}^k g_i (Y_i^2 -\sigma^2 )$, as desired.

This form of the prior variance for the $\mu_i$ is not accidental.
Suppose, more generally, that $\mu_i\sim N (0,\allowbreak  \tau_i^2(\rho) )$,
 {a priori}, for some hyperparameter $\rho$. Then the score equation
for $\widehat\rho$ is $\sum_{i=1}^k w_i(\widehat\rho) Y_i^2 =
\sum_{i=1}^k w_i(\widehat\rho)\cdot (\tau_i(\widehat\rho)+\sigma^2 )$,
where $w_i(\rho) = \tau_i(\rho)\dot\tau_i(\rho)/\break (\tau_i(\widehat
\rho)+
\sigma^2 )^2$. If we want the weight $w_i$ to be proportional to $g_i$,
then we get a simple differential equation, the general solution of which
is given by $ (\tau_i(\rho)+\sigma^2 )^{-1} = g_i\rho+ d_i$.
Hence, the general form of the prior variance is
\[
\tau_i^2(\rho) = (g_i\rho+
d_i )^{-1} -\sigma^2.
\]
The prior suggested above simply takes $d_i = 0$, for all~$i$. If the type
II Bayesian really believes that all the $\mu_i$ should have some known
prior variance $\tau_0^2$, he can take $d_i =  (\tau_0^2 +
\sigma^2 )^{-1} - g_i$, obtaining the expression
\[
\tau_i^2(\rho) = \frac{\tau_0^2 + (\rho-1)(\tau^2+\sigma^2)\sigma^2 g_i}{
1 + (\rho-1)(\tau^2+\sigma^2)\sigma^2 g_i}.
\]
If the variance of the $\mu_i$ really is $\tau_0^2$, then the posterior
for the hyperparameter $\rho$ will concentrate on $1$ and the $\tau_i^2$
will concentrate on $\tau_0^2$. If, on the other hand, $\tau^2$ is unknown,
the resulting estimator will still perform well, although the expression
for $\tau_i^2$ is quite arbitrary.

The discussion above holds when we are interested in estimating the
hyperparameter $\sum_{i=1}^k g_i\tau_i^2(\rho)$. This is a legitimate
change in the rules and the resulting estimator can be used to
estimate $\th$ in the nonlocalizable regime, because the main
contribution to the estimator is the contribution of the prior,
conditioning on $\tau_i^2(\rho)$. However, when
$\tau_i^2(\rho)\approx\sigma^2$, there may be a clear difference
between the Bayesian estimators of $\sum_{i=1}^k\tau_i^2(\rho)$ and
$\sum_{i=1}^k\mu_i^2$, respectively.

We conjecture that a construction based on stratification might be used
to avoid the problems discussed above: the use of an unnatural
prior and the difference between estimating the hyperparameter
and estimating the norm. In this case, we would stratify based
on the values of the $g_i$ and estimate $\sum\mu_i^2$ separately
in each stratum. The price paid by such an estimator is a large
number of hyperparameters and a prior suited to a very specific
task.

The discussion above shows that $\widehat{\th}$ can at least be
approximated by a Bayesian estimator, but the corresponding prior
has to have a specific form and would have to have been chosen
for convenience rather than prior belief. This presents no
difficulty for the type II Bayesian, who is free to select
his prior to achieve a particular goal. However, problems
with the prior remain. The prior is tailor-made for a
specific problem: while $\bt_1,\ldots,\bt_k$ \iid $N(0,\tau^2)$
is a very good prior for estimating $\sum_{i=1}^k\mu_i^2$, when
the parameter of interest is not permutation invariant, the
estimator is likely to perform poorly in frequentist terms.
Also, the prior is appropriate for regular models but not
sparse ones. Consider again the nonlocalizable regime in
which $\sqrt{k}\sigma^2\ll\th\ll k\sigma^2$, but suppose that
most of the $\mu_i$ are very close to zero, with only a few
taking values larger than $\sigma^2$ in absolute value.
A Bayesian estimator based on the prior suggested above will
shrink all the $Y_i$ toward $0$, strongly biasing the
estimates of the $\mu_i$, whereas a standard (soft or hard)
thresholding estimator will have much better performance.
A completely different prior is needed to deal with sparsity.
See \cite{greenshtein-etal-2008} and \cite{vanderpas-etal-2013}
for an empirical Bayes solution to the sparsity problem.

\subsection{A Bayesian Analysis of the White Noise Model}

Returning to original model, $X_i\sim N(\bt_i,1/n)$,
$|\bt_i|<i^{-\alpha}$, with $\th=\sum_{i=1}^n\bt_i^2$,
we can use a prior for which the $\bt_i$ are \iid
$N(0,\tau^2)$, for $i=1,\ldots, m$, and $0$, otherwise,
where $m=n^{1/(4\alpha-2)+\nu}$, for some $\nu> 0$.
This gives us a Bayesian estimator of $\th$ which is
asymptotically equivalent to the unbiased estimator,
$\widehat{\th}= \sum_{i=1}^n  (X_i^2 - n^{-1} )$,
and asymptotically efficient. However, the corresponding
estimator for $\bb$ is not even consistent and, when we
try to estimate $\bt_i$, even for $i$ relatively small,
we see that the Bayesian estimator shrinks $X_i$ toward $0$
by a factor of $1-\rho$ where $\rho$ is asymptotically
larger than $\th m/n=\th n^{-(4\alpha-3)/(4\alpha-2)-\nu} \gg n^{-1/2}$.
So our estimate of $\bt_i$ fails to be $\sqrt{n}$-consistent.

A more reasonable approach, in this situation, is to partition
the set $X_1,\ldots, X_n$ into blocks, $\{X_{k_{j-1}},\ldots
,\allowbreak  X_{k_j}\}$,
$j=1,\ldots,J$, and use a mean-zero Gaussian prior with unknown variance
in each of the blocks. One possible assignment is $k_0=1$, $k_1=o(\sqrt{n})$,
and $k_j=2k_{j-1}$, $j>1$. Thus, $O(\log n)$ blocks are needed. The analysis
presented above shows that this prior would yield a good estimator of
$\th$
without, hopefully, sacrificing our ability to estimate the $\bt_i$ at
the $\sqrt{n}$-rate.
Of course, this prior is not supported on the parameter space
$\mathcal{B}_{\alpha}$: it forces uniform shrinkage of the observations
in each block (and bypasses the plug-in property by estimating
block-wise hyperparameters). But there is nothing ``natural'' about
these blocks and nothing in the problem statement suggests this
grouping.

As before, this ``objective'' prior was constructed with a specific
parameter in mind and is unlikely to be effective for other parameters;
it cannot represent prior beliefs. The prior will also fail when
sparsity makes the block structure inappropriate. The unbiased,
frequentist estimator has no such difficulty. The Bayesian is
obliged to conform to the plug-in principle and, because of this,
at some stage, must get stuck with the wrong prior for some parameter
which was not considered interesting initially.

\begin{figure*}[b]

\includegraphics{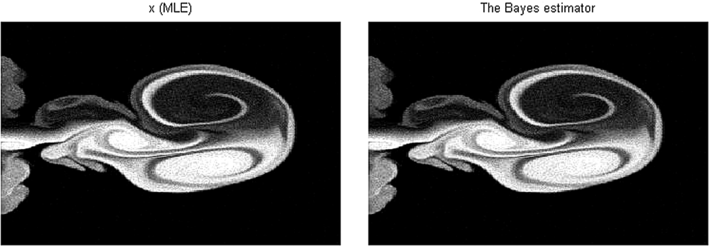}

\caption{\textup{(a)} the MLE; \textup{(b)} the Bayesian estimator.}\label{bayes-stop}
\end{figure*}

Consider a general prior $\pi$. Let $\pi_i$ be the prior for $\bt_i$
given $X_{-i}=(X_1, \ldots, X_{i-1}, X_{i+1}, \ldots)$. For $i>
n^{1/2\alpha+\nu}$
with $\nu> 0$ arbitrarily small and $m=n^{1/(4\alpha-2)+\nu}$, as in
Proposition~\ref{unbiased},
%
\begin{eqnarray}
\label{l2bi} &&\E_{\pi}\bigl(\beta_i^2 |
X_1,\ldots,X_m\bigr)
\nonumber
\\
&&\quad= \frac{\int_{-i^{-\alpha}}^{i^{-\alpha}} t^2
\varphi (n(X_i-t) )\,d\pi_i(t)}{
\int_{-i^{-\alpha}}^{i^{-\alpha}}\varphi (n(X_i-t) )\,d\pi_i(t)}
\\
&&\quad\in\bigl(a^{-1}\E_{\pi_i}
\beta^2_i,a\E_{\pi_i}\beta^2_i
\bigr),
\nonumber
\end{eqnarray}
where for $\mathcal{I}=\{i:n^{1/2+\nu}<i\le n^{1/(4\alpha-2)+\nu}\}$,
\[
\max_{i\in\mathcal{I}}\log a \leq\mathop{\max_{i\in\mathcal{I}}}_{|t_i|<i^{-\alpha}}
n\bigl\llvert (X_i-t_1)^2-(X_i-t_2)^2
\bigr\rrvert \stackrel{p} {\rightarrow} 0,
\]
since $\max_{i\in\mathcal{I}}n^{1/2-\nu}|X_i|\stackrel{p}{\rightarrow} 0$.
But this means that the estimate of $\beta_i^2$ depends only weakly on
$X_i$ itself. It is mainly a function of $X_{-i}$ and the prior. Moreover,
if the estimate of $\th$ is to be close to the unbiased one, then this
must be achieved through the influence of $X_i$ on the estimates of
$\bt_j$, for $j\neq i$. This is the case in the construction above
where formally we are estimating a hyperparameter of the prior,
rather than $\th$, itself. The result is a nonrobust estimator
which works for the particular functional of interest but not others.
In fact, we have the following theorem.

\begin{theorem}\label{l2fail}
Let $\bb_{-i}=(\bt_1,\ldots,\bt_{i-1},\bt_{i+1},\allowbreak  \ldots)$. Let $\pi$
be the
prior on $\bb$.
Suppose that there is an $\eta>0$ such that a.s. under the prior $\pi$:
$P_\pi(\lceil4i^{2\alpha}\beta_i^2\rceil= \kappa|\bb_{-i}) > \eta$,
$i=1,2,\ldots$\,, and $\kappa=1,\ldots,4$. There exists a set $S=S_n$ with
$\pi(S_n)\rightarrow1$, such that for all $\bb\in S$ there is a sequence
$g_1,g_2,\ldots$\,, for which the Bayesian estimator of $\sum g_i\bt
_i^2$ with
respect to $\pi$ is not $\sqrt{n}$-consistent.
\end{theorem}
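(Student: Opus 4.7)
The plan is to use (\ref{l2bi}) to reduce the Bayesian estimator to an $X_{-i}$-only predictor of $\bt_i^2$ on $i \in \mathcal{I}$, then exhibit, for each $\bb$ in a high-prior-probability set $S_n$, a $\bb$-measurable sequence $g$ whose action on this predictor produces a systematic error much larger than $n^{-1/2}$. By (\ref{l2bi}) the Bayesian estimator of $\sum g_i \bt_i^2$ equals $\sum_{i \in \mathcal{I}} g_i \E_\pi(\bt_i^2 \,|\, X_{-i})(1+o_P(1)) + R_n$, with $R_n$ collecting the indices $i \notin \mathcal{I}$; if $g$ is bounded and supported on $\mathcal{I}$, the tail $i > n^{1/(4\alpha-2)+\nu}$ contributes $o(n^{-1/2})$ by the envelope $\bt_i^2 \leq i^{-2\alpha}$. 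Hence it suffices to find a $\bb$-measurable $g$ supported on $\mathcal{I}$ making $\sum_{i\in\mathcal{I}} g_i(\E_\pi(\bt_i^2 \,|\, X_{-i}) - \bt_i^2)$ of order $n^{1/(2\alpha)-1}$, which exceeds $n^{-1/2}$ for $1/2 < \alpha < 1$.

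My first step is to convert the quartile-spread hypothesis into a quantitative dispersion bound. Since $\bt_i$ is conditionally independent of $X_{-i}$ given $\bb_{-i}$, the law of total variance gives
\begin{equation*}
\mathrm{Var}_\pi(\bt_i^2 \,|\, X_{-i})
 \;\geq\; \E_\pi[\mathrm{Var}(\bt_i^2 \,|\, \bb_{-i}) \,|\, X_{-i}]
 \;\geq\; c(\eta)\, i^{-4\alpha}\quad\text{a.s.,}
\end{equation*}
and since $\bt_i^2 \in [0,i^{-2\alpha}]$ a Paley--Zygmund-type bound yields $\E[|\bt_i^2 - \E_\pi(\bt_i^2\,|\,X_{-i})| \,|\, X_{-i}] \geq c'(\eta)\, i^{-2\alpha}$ almost surely.

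Next, I would take $g_i = \mathrm{sgn}(\bt_i^2 - m_i^\ast(\bb))\,\ind(i \in \mathcal{I})$, where $m_i^\ast(\bb) := \E_\pi(\bt_i^2 \,|\, \bb_{-i})$ is $\bb$-measurable. Using the identity $\E_\pi(\bt_i^2 \,|\, X_{-i}) = \E_\pi(m_i^\ast(\bb) \,|\, X_{-i})$ (which follows from the same conditional independence), $\hat\th_B - \th$ decomposes as
\begin{equation*}
-\sum_{i\in\mathcal{I}} |\bt_i^2 - m_i^\ast(\bb)| \;+\; \sum_{i\in\mathcal{I}} g_i\bigl(\E_\pi(m_i^\ast(\bb)\,|\,X_{-i}) - m_i^\ast(\bb)\bigr) \;+\; o_{P_\bb}(n^{-1/2}).
\end{equation*}
The first term is $P_\bb$-deterministic with $\pi$-mean at least $c''\sum_{i\in\mathcal{I}} i^{-2\alpha} \asymp n^{1/(2\alpha)-1}$, and under mild regularity of $\pi$ (e.g.\ independent coordinates, where a Chebyshev bound with total variance $\asymp n^{1/(2\alpha)-2}$ against squared mean $\asymp n^{2/(2\alpha)-2}$ closes the gap) it exceeds half its expectation on a set $S_n$ with $\pi(S_n)\to 1$.

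The main obstacle is controlling the data-dependent second sum. Its $P_\bb$-variance is bounded by $\sum_{i\in\mathcal{I}} i^{-4\alpha}\asymp n^{1/(2\alpha)-2}$, which is negligible against the square of the main-bias term, so the stochastic fluctuation does not cancel the bias. The delicate step is the $P_\bb$-mean $\sum g_i(\E_{P_\bb}\E_\pi(m_i^\ast\,|\,X_{-i}) - m_i^\ast(\bb))$: this is a posterior-consistency statement for the functional $m_i^\ast$ of $\bb_{-i}$, requiring the $X_{-i}$-posterior of $m_i^\ast$ to concentrate at the truth with error $o(i^{-2\alpha})$ uniformly on $\mathcal{I}$. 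Under a standard posterior-contraction assumption on $\pi$ this holds on a set of $\bb$ of prior probability tending to $1$, which is absorbed into $S_n$. Combining the three pieces yields $|\hat\th_B - \th| = \Omega_{P_\bb}(n^{1/(2\alpha)-1}) \gg n^{-1/2}$ on $S_n$, ruling out $\sqrt{n}$-consistency.
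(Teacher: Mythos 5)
Your overall strategy---use \eqref{l2bi} to replace the Bayesian estimator of $\bt_i^2$ by an $X_{-i}$-measurable quantity on $\mathcal{I}$, then choose a $\bb$-measurable sign pattern $g$ that makes errors of size $i^{-2\alpha}$ accumulate to order $n^{1/(2\alpha)-1}\gg n^{-1/2}$---is the same as the paper's, and your conversion of the quartile hypothesis into the dispersion bound $\E\bigl[|\bt_i^2-\E_\pi(\bt_i^2\,|\,\bb_{-i})|\,\big|\,\bb_{-i}\bigr]\geq c'(\eta)\,i^{-2\alpha}$ is fine. The gap is in your choice of $g_i=\mathrm{sgn}\bigl(\bt_i^2-m_i^\ast(\bb)\bigr)$ with $m_i^\ast(\bb)=\E_\pi(\bt_i^2\,|\,\bb_{-i})$. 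This centers the error at the \emph{prior} conditional mean rather than at the estimator actually being used, and it leaves you with the cross term $\sum_{i\in\mathcal{I}} g_i\bigl(\E_\pi(m_i^\ast\,|\,X_{-i})-m_i^\ast(\bb)\bigr)$, which you can only dispose of by invoking a ``standard posterior-contraction assumption'' that is nowhere among the hypotheses of the theorem. Worse, the contraction you need---that the posterior mean of $m_i^\ast$ given $X_{-i}$ tracks its true value to within $o(i^{-2\alpha})$ uniformly over $i\in\mathcal{I}$---amounts, at the lower end $i\asymp n^{1/2\alpha+\nu}$ of $\mathcal{I}$, to accuracy $o(n^{-1})$ for a bounded functional of $\bb_{-i}$, which is faster than the parametric rate and cannot hold in any generality. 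Since $m_i^\ast$ ranges over an interval of length $i^{-2\alpha}$ and $g_i$ is correlated with both $m_i^\ast$ and $X_{-i}$, this cross term can be of exactly the same order $n^{1/(2\alpha)-1}$ as your main (negative) term and with uncontrolled sign, so nothing prevents cancellation.

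The paper's proof sidesteps this entirely by defining the selection in terms of the frequentist bias of the Bayes estimator itself: it sets $c_i'(\bb)=\ind\{\E_{\bb}(\widehat\bt_i^2-\bt_i^2)<-i^{-2\alpha}/4\}$ and $c_i''(\bb)=\ind\{\E_{\bb}(\widehat\bt_i^2-\bt_i^2)>i^{-2\alpha}/4\}$, shows via \eqref{l2bi} and the dispersion bound that a fraction at least $\eta/2$ of the indices in $\mathcal{I}$ satisfy one or the other, and then takes $g_i=c_i(\bb)$ to be whichever of the two occurs for at least an $\eta/3$ fraction of indices (a pigeonhole step replacing your Paley--Zygmund step). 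Because the $g_i$ are then supported on coordinates whose bias already has a common sign and magnitude at least $i^{-2\alpha}/4$, the systematic error is directly $\sum_i g_i\,\E_{\bb}(\widehat\bt_i^2-\bt_i^2)\gtrsim \eta\,n^{1/(2\alpha)-1}$ and there is no residual posterior-fluctuation term to control. To repair your argument you would need to make essentially this substitution: define the sign pattern from $\E_{\bb}(\widehat\bt_i^2-\bt_i^2)$ rather than from $\bt_i^2-m_i^\ast(\bb)$.
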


The proof is given in  Appendix~\ref{proofs}. The conditions in the theorem
are needed to ensure that support of the  prior does not degenerate
to a
finite-dimensional parametric model.

\section{Data-Dependent Sample Sizes and Stopping Times}
\label{stopping}

The stopping rule principle (SRP) says that, in a sequential experiment,
with final data $\mathbf{x}^N(\tau)$, inferences should not depend on the
stopping time $\tau$; see \cite{berger-wolpert-1988}. In so much as Bayesian
techniques follow the strong likelihood principle (SLP), they must also
follow the SRP.

To see that high dimensional data represents a challenge for the SRP,
consider another version of the white noise model. Let
$n^{-2\alpha} < \bt_i < 3n^{-2\alpha}$,
$i=1,\ldots$\,, $k=\lfloor n^{2\alpha}\rfloor$, and
$1/6 < \alpha< 1/4$. Suppose that, for each $i$, $X_i(\cdot)$ is a
Brownian motion with drift $\bt_i$, and that $X_i$ is observed until
some random time $T_i$. Take $\bar X_i(t) = X_i(t)/t$ and note that
this is the sufficient statistic for $\bt_i$ given
$\{X_i(s) : s < t\}$. Of course, $\bar X_i$ is also
the MLE. Finally, let $\pi_i$ be the prior for $\bt_i$
given $X_{-i} = (X_1,\ldots,X_{i-1},X_{i+1},\ldots)$.
Let $f_i(\cdot)$ be the density of the distribution of
$X_i(T_i)$ given $X_{-i}$; $f_i = \pi_i * N(0,1/T_i)$.
We assume that the prior $\pi_i$ is non-parametric in the
sense that $\pi_i$ is bounded away from $0$ on the
allowed support, so that $X_{-i}$ does not give us
too much information about $\bt_i$.

It is well known that the posterior mean of $\bt_i$ given the
data satisfies
\[
\E(\bt_i | \mbox{data}) = \bar X_i(T_i) +
\frac{1}{T_i}\frac{f_i' (X_i(T_i) )}{f_i (X_i(T_i) )}.
\]

If $T_i = O(n)$, then $f_i\approx\pi_i$ and $\bar X_i(T_i)\approx\bt_i$.
Suppose $T_i$ is correlated with $g_i(\bt_i)$, where $g_i = f_i'/f_i$,
then the MLE of $\sum_{i=1}^k\bt_i$, given by $\sum_{i=1}^k X_i(T_i)$
is unbiased and has a random error on the order of $n^{\alpha} n^{-1/2}$,
while the Bayesian estimator has a bias which is $\sim n^{2\alpha}
n^{2\alpha}/n$,
with $n^{2\alpha}$ terms each contributing $O(n^{2\alpha})$ to the
bias, from
$g_i$, and a term of $O(1/n)$ from $1/T_i$.
With $1/6 < \alpha< 1/4$, the Bayes bias dominates the random error.

\subsection{An Example}

We consider again the vector $\bb$ represented in
Figure~\ref{images}(a), but this time the vectorized version
of the spine image shown in Figure~\ref{images}(d) is used to
specify the random number of observations associated with each
element of $\bb$. Adding noise to Figure~\ref{images}(a), we
get the observed data and MLE, shown in Figure~\ref{bayes-stop}(a).
This SNR is higher here than before ($+$2.72\,db) and, as a result,
the Bayesian estimator shown in Figure~\ref{bayes-stop}(b) is
much smoother.

Here, we used a prior with independent Gaussian components, each
with a mean equal to the mean of $\bb$ and variance equal to the
variance of the $\bt_i$. We have two processes on the unit square:
one represents $\bb$ and the other corresponds to random stopping
times, with the number of observations proportional to the gray-scale
value of the corresponding pixel in the image of the spine. As we have
already seen, these images are correlated, although there is no reason,
 {a priori}, to expect they would be, having been chosen at random
from a collection of unrelated images. This correlation causes trouble:
In $500$ Monte Carlo simulations, the RMSE of the Bayesian estimator of
the sum of the $\bt_i$ is $0.05$, whereas the RMSE of the MLE is
$0.009$. The difference is due almost entirely to bias. If we
replace the stopping times with a fixed time, the Bayesian estimator
performs better, achieving a RMSE of $0.0071$ versus the RMSE of
the MLE $= 0.0072$. This example shows clearly that the Bayesian
estimator can be badly biased when the stopping times and the
unknown parameters happen to be correlated.

\section{Bayesian Procedures are Efficient under Bayesian
Assumptions}\label{positive}

\cite{freedman-1965} proves that in some very weak sense consistency of
Bayesian procedures is ``rare.'' We, however, start with a version of Doob's
consistency result and show that the existence of a uniformly
$\sqrt n$-consistent estimator ensures that the posterior distribution
is $\sqrt n$-consistent with prior probability $1$.

To simplify notation, we consider the Markov chain
$\bt_0\rightarrow X_n\rightarrow\bt_n$, where $\bt_0, \bt_n\in\mathcal{B}$,
$\bt_0\sim\pi$, $X_n\sim P_{\bt_0}$, and given $X_n$, $\bt_0$ and $\bt_n$
are \iid That is, given $X_n$, $\bt_n$ is distributed according to the
posterior distribution $\pi_{X_n}$. Let $P$ be the joint distribution of
the chain. With some abuse of notation, $P_{\bt_0}$ is also the conditional
distribution of the chain given that it starts at $\bt_0$. Let $d_n$ be a
semi-metric on the parameter space, normalized to the sample size.
Typically, in the nonparametric situation considered in this paper,
$d_n(\bt,\bt')=\sqrt n|\th(\bt)-\th(\bt')|$ for some
real-valued functional $\th$ of the parameter.

We consider an estimator $\tilde\bt_n$ to be $d_n$ \emph{consistent
uniformly on} $\mathcal{B}$, if for every $\varepsilon> 0$ there is
an $M < \infty$ such that for all $\bt\in\mathcal{B}$ and $n$ large
enough, $P_\bt \{d_n(\tilde\bt_n,\bt)\geq M \}\leq\varepsilon$.
The posterior is $d_n$ \emph{consistent uniformly on} $\mathcal{B}$ if
for every $\varepsilon> 0$ there is an $M < \infty$ such that for
all $\bt_0\in\mathcal{B}$ and $n$ large enough,
$P_{\bt_0} \{d_n(\bt_n,\bt_0)\geq M \}\leq\varepsilon$.

Thus, we consider the inference to be $d_n$ uniformly consistent if
the frequentist Markov chain, $\beta_0\to X_n\to\tilde\beta_n$, or
the Bayesian one, $\beta_0\to X_n\to\beta_n$ lands in an
$O_p(1)$ $d_n$-ball about $\beta_0$.

\begin{theorem}\label{bayescons}
Suppose there is an estimator which is $d_n$ consistent uniformly on
$\mathcal{B}$. Then there is a $\mathcal{B}'\subseteq\mathcal{B}$ such
that $\pi(\mathcal{B}')=1$ and the posterior is $d_n$ consistent
uniformly on $\mathcal{B}'$.
\end{theorem}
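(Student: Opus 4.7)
The key observation is that the Bayesian Markov chain is self-symmetric: conditionally on $X_n$, the true parameter $\bt_0$ and the posterior draw $\bt_n$ are i.i.d.\ from the posterior $\pi_{X_n}$. Since $\tilde\bt_n$ is $X_n$-measurable, it plays the same role with respect to $\bt_0$ and $\bt_n$, and this allows a triangle inequality in the $d_n$-metric to transfer the uniform frequentist rate to the posterior.

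First, I bound the joint probability. Fix $\varepsilon > 0$ and let $M_\varepsilon,N_\varepsilon$ be given by the uniform $d_n$-consistency of $\tilde\bt_n$. By the conditional exchangeability of $\bt_0$ and $\bt_n$ given $X_n$, for $n \geq N_\varepsilon$,
\begin{equation*}
P\{d_n(\bt_n, \tilde\bt_n) \geq M_\varepsilon\} = P\{d_n(\bt_0, \tilde\bt_n) \geq M_\varepsilon\} = \int P_\bt\{d_n(\tilde\bt_n, \bt) \geq M_\varepsilon\}\, d\pi(\bt) \leq \varepsilon.
\end{equation*}
A union bound through the triangle inequality $d_n(\bt_n,\bt_0) \leq d_n(\bt_n,\tilde\bt_n) + d_n(\tilde\bt_n,\bt_0)$ then gives $P\{d_n(\bt_n,\bt_0) \geq 2 M_\varepsilon\} \leq 2\varepsilon$, which by Fubini reads $\int P_{\bt_0}\{d_n(\bt_n,\bt_0) \geq 2 M_\varepsilon\}\, d\pi(\bt_0) \leq 2\varepsilon$.

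Second, I extract the set $\mathcal{B}'$ of full prior mass. Apply the first step along a fast-decaying sequence $\varepsilon_k$ with $\sum_k \varepsilon_k^{1/2} < \infty$ (e.g. $\varepsilon_k=k^{-4}$), obtaining constants $M_k,N_k$ and functions $h_n^{(k)}(\bt_0) := P_{\bt_0}\{d_n(\bt_n,\bt_0) \geq 2 M_k\}$ with $\int h_n^{(k)}\,d\pi \leq 2\varepsilon_k$ for $n \geq N_k$. Markov's inequality bounds the bad set at fixed $n$, $\pi\{h_n^{(k)} > \varepsilon_k^{1/2}\} \leq 2\varepsilon_k^{1/2}$, and combining a Borel--Cantelli argument in $k$ with the martingale property of posterior masses $\pi_{X_n}(A)$ for fixed measurable $A$ (which converts the Fatou-style $\liminf_n$ bound into a $\limsup_n$ bound) yields a set $\mathcal{B}' \subseteq \mathcal{B}$ with $\pi(\mathcal{B}')=1$ on which, for each sufficiently large $k$, $h_n^{(k)}(\bt_0) \leq \varepsilon_k^{1/2}$ for all $n$ large enough. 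Given any $\varepsilon > 0$, choosing $k$ with $\varepsilon_k^{1/2} \leq \varepsilon$ gives $P_{\bt_0}\{d_n(\bt_n,\bt_0) \geq 2 M_k\} \leq \varepsilon$ for all $\bt_0 \in \mathcal{B}'$ and all sufficiently large $n$, which is the required uniform $d_n$-consistency of the posterior.

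The main obstacle is the $n$-uniformity in the second step. Markov's inequality alone controls the bad set only at each fixed $n$; passing to a simultaneous statement for all large $n$ on a single full-measure set requires either invoking Doob-type martingale convergence for posterior masses of fixed neighborhoods, or choosing the $\varepsilon_k$ schedule so that the combined Borel--Cantelli sum in $(k,n)$ converges. This interplay between the refinement index $k$ and the sample-size index $n$ is the delicate point of the proof.
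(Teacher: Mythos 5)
Your first step is exactly the engine of the paper's own argument: the identity $P\{d_n(\bt_n,\tilde\bt_n)\geq M\}=P\{d_n(\bt_0,\tilde\bt_n)\geq M\}$, obtained from the conditional exchangeability of $\bt_0$ and $\bt_n$ given $X_n$, is the content of the paper's Lemma~\ref{bayescons2} (the ``$\E(\E(\E(U\,|\,V)\,|\,W))=\E(U)$'' computation), and your triangle-inequality/union-bound step is the content of Lemma~\ref{bayescons1}. So up to the integrated bound $\int P_{\bt_0}\{d_n(\bt_n,\bt_0)\geq 2M_\varepsilon\}\,d\pi(\bt_0)\leq 2\varepsilon$ for $n\geq N_\varepsilon$, you and the paper coincide.

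The gap is in your second step, and it is precisely the one you flag without resolving. Markov's inequality controls $\pi\{h_n^{(k)}>\varepsilon_k^{1/2}\}\leq 2\varepsilon_k^{1/2}$ separately for each fixed $n\geq N_k$; for fixed $k$ the sum of these bounds over $n$ diverges, so no choice of the schedule $\varepsilon_k$ makes a Borel--Cantelli argument in $(k,n)$ go through, and the union over $n$ of the bad sets may a priori have prior mass close to $1$. Your alternative device, Doob martingale convergence of $\pi_{X_n}(A)$ for a fixed measurable $A$, does not apply either, because the relevant sets $\{\bt:d_n(\bt,\bt_0)\geq M\}$ change (indeed shrink) with $n$ when $d_n(\bt,\bt')=\sqrt n\,|\th(\bt)-\th(\bt')|$, so there is no fixed $A$ whose posterior mass forms the martingale you need. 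The paper sidesteps this entirely by arguing by contradiction: it supposes the posterior fails in a $\limsup_n$ sense on a set of positive prior mass, and then needs your Fubini/symmetry bound only along the subsequence of $n$ where the failure occurs -- which is exactly what the per-$n$ Markov bound supplies -- to contradict the uniform consistency of $\tilde\bt_n$; no simultaneous-in-$n$ control on a single full-measure set is ever constructed. To complete your direct version you would have to either recast the conclusion in that $\limsup$/contrapositive form, or supply a genuinely new argument for the $n$-uniformity; as written, the second step does not prove the stated theorem.
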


The proof is given in Appendix~\ref{proofs}.

Thus, the existence of a uniformly good frequentist estimator ensures that
the there is a set with prior probability one such that the Bayes posterior
is uniformly consistent at the right rate on that set.
The difficulty with this statement is that, in high dimensional spaces,
there is no natural extension of Lebesgue measure and null sets of
very natural-looking priors are sometimes much larger than one would expect.
For a simple example, consider
a prior with hyperparameters of the type we considered for the white noise
models: $\tau$ has standard exponential distribution, and $\bt_i,\ldots
,\bt_k$
are, given $\tau$, \iid $N(0,\tau^2)$. Consider the set
$S=\{\bb: k^{-1}\sum_{i=1}^k (\bt_i-\bar\bt_k)^4 <
2.5 (k^{-1}\sum_{i=1}^k (\bt_i-\bar\bt_k)^2)^2\}$.
The probability of $S$ is $0.82$ if $k=5$. It drops to approximately $0.27$
when $k=50$. It is $0.0025$ for $k=500$, and negligible when $k=5000$.
(These numbers are based on $100\mbox{,} 000$ Matlab simulations.) The set $S$
is not so unusual or unexpected that it can be really ignored  {a priori}
and, unlike most sets, $S$ is simple to comprehend. If inferences
depend on
whether or not the fourth moment of the parameter is exactly three times
the square of the second, as implied by the normality assumption, which
was made for convenience, these inferences would not be robust.

Theorem~\ref{bayescons} does not contradict our findings. In the stratified
sampling and partial linear model examples of Sections~\ref{stratified} and~\ref{partial}, the difference between the Bayesian estimator and the frequentist
one is that the former ignores the information that restricts the model
to a
subset of the parameter space which has prior probability $0$. In the white
noise models of Sections~\ref{plugin} and~\ref{norm}, the requirement that
the prior be ``honestly nonparametric'' limits $\bt_1,\bt_2,\ldots$ to regular
sequences obeying a law of large numbers and, as a result, the set of
non-ergodic sequences is given prior probability $0$. And, in these examples,
there are two phenomena which make this theorem irrelevant. First, Bayesian
estimators must obey the plug-in principle, restricting estimators to those
of the form $\th(\tilde\bt)$ for $\tilde\bt\in\mathcal{B}$, while the
\mbox{frequentist} estimator cannot be written in this form. Second, each prior
fails for a different functional, but if the functional and the parameter
are chosen together, as we have argued might well happen, this theorem has
no consequences.

The second result of this section gives an easy abstract construction which
shows that, under some conditions, a type II Bayesian is able to choose
a prior with good frequentist properties. Our setup is as follows. In the
$n$th problem, we observe $X^{(n)}\sim P\in\mathcal{P}^{(n)}\ll\nu$, with
density $p=dP/d\nu$. Estimators take values in the set $\mathcal{A}$, and
a loss function
$\ell_n\dvtx \mathcal{P}^{(n)}\times\mathcal{A}\rightarrow\mathbb{R}^+$ is used
to assess the ``cost'' associated with a particular estimate. We assume that
$\ell_n$ is bounded by $L_n < \infty$ for all $n$ and that:
\begin{enumerate}[{A}3.]
\item[{A}1.]The loss function is Lipschitz: for all $a\in\mathcal{A}$ and
$P, P'\in\mathcal{P}^{(n)}$: $|\ell_n(P,a)-\ell_n(P',a)|\leq c_n\|
p-p_n\|$,
where $\|\cdot\|$ is the variational norm.
\item[{A}2.]Given $\varepsilon> 0$ there exists a finite set $\mathcal{P}_K^{(n)}
\subset\mathcal{P}^{(n)}$ with cardinality $\kappa_{n,\varepsilon}$,
such that
\[
\sup_{P\in\mathcal{P}^{(n)}}\inf_{P'\in\mathcal{P}_K^{(n)}} \bigl \|P-P'\bigr \|
\leq\varepsilon.
\]

\item[{A}3.]Let $R_n(P,\delta)=\E_P\ell_n(P,\delta(X))$, where
$\delta\dvtx \mathcal{X}^{(n)}\to\mathcal{A}$, or more generally, $\delta$ is
a randomized procedure (or Markov kernel from $\mathcal{X}^{(n)}$ to
$\mathcal{A}$). Let $R_n(\delta)=\sup_{P\in\mathcal{P}^{(n)}}
R_n(P,\delta)$.
There exist $\delta^*$ such that $R_n(\delta^*) = \inf_\delta
R_n(\delta)\equiv r_n\leq r<\infty$ for all $n$.
\end{enumerate}

Let $\mu_n$ be a probability measure on $\mathcal{P}_K^{(n)}$.
The corresponding posterior distribution is
$\mu_n(P_j|X^{(n)}) = \mu_n(P_j) p_j(X^{(n)})/\sum_{k=1}^{\kappa}
\mu_n(P_k) p_k(X^{(n)})$. Let $\delta_{\mu_n}$ be the Bayesian procedure
with respect to $\mu_n$.

\begin{theorem}\label{construction}
If conditions \textup{A1--A3} hold, then for all $\varepsilon' > 0$,
there exist $\mu_{n,\varepsilon'}$ on $\mathcal{P}^{(n)}$,
such that $R_n(\delta_{\mu_{n,\varepsilon'}})\leq r_n +
\varepsilon'$.
\end{theorem}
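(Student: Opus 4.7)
The plan is to reduce the original decision problem to a finite game indexed by the $\varepsilon$-net $\mathcal{P}_K^{(n)}$ in A2, apply the classical (finite) minimax theorem there to obtain a least favorable prior, and then transfer the resulting bound back to the whole model $\mathcal{P}^{(n)}$ using the Lipschitz condition in A1 together with the total-variation bound on integrals of bounded functions.

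First I would quantify how much risk can change when $P$ is replaced by a nearby $P'\in\mathcal{P}_K^{(n)}$. For any (possibly randomized) procedure $\delta$, write
\begin{equation*}
  R_n(P,\delta)-R_n(P',\delta)=\bigl(\E_P\ell_n(P,\delta)-\E_P\ell_n(P',\delta)\bigr)+\bigl(\E_P\ell_n(P',\delta)-\E_{P'}\ell_n(P',\delta)\bigr).
\end{equation*}
The first term is bounded by $c_n\|p-p'\|$ by A1, and the second by $L_n\|P-P'\|$ since $\ell_n$ is bounded by $L_n$ and variational distance controls integrals of uniformly bounded functions. Hence $|R_n(P,\delta)-R_n(P',\delta)|\le(c_n+L_n)\|P-P'\|$, uniformly in $\delta$. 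Choosing $\varepsilon=\varepsilon'/(2(c_n+L_n))$ in A2 produces a finite net $\mathcal{P}_K^{(n)}$ such that every $P\in\mathcal{P}^{(n)}$ has some $P'\in\mathcal{P}_K^{(n)}$ with $|R_n(P,\delta)-R_n(P',\delta)|\le\varepsilon'/2$.

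Next, restrict the game to $\mathcal{P}_K^{(n)}$. Because $\ell_n\le L_n$ and $\mathcal{P}_K^{(n)}$ is finite, the restricted game is a bounded matrix game over mixed strategies; von Neumann's minimax theorem yields a least favorable prior $\mu^*$ on $\mathcal{P}_K^{(n)}$ and an associated Bayes procedure $\delta_{\mu^*}$ satisfying
\begin{equation*}
  \sup_{P\in\mathcal{P}_K^{(n)}}R_n(P,\delta_{\mu^*})=\inf_\delta\sup_{P\in\mathcal{P}_K^{(n)}}R_n(P,\delta)\le r_n,
\end{equation*}
since every procedure available against $\mathcal{P}^{(n)}$ is also available against $\mathcal{P}_K^{(n)}\subset\mathcal{P}^{(n)}$. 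Combining with the previous step, for an arbitrary $P\in\mathcal{P}^{(n)}$ and its $\varepsilon$-close neighbor $P'\in\mathcal{P}_K^{(n)}$,
\begin{equation*}
  R_n(P,\delta_{\mu^*})\le R_n(P',\delta_{\mu^*})+\varepsilon'/2\le r_n+\varepsilon'/2<r_n+\varepsilon',
\end{equation*}
and we set $\mu_{n,\varepsilon'}=\mu^*$, viewed as a prior on $\mathcal{P}^{(n)}$ supported on the finite subset $\mathcal{P}_K^{(n)}$.

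The principal technical nuisance will be justifying the existence of a genuine Bayes \emph{procedure} $\delta_{\mu^*}$ attaining the value of the restricted minimax problem; the action space $\mathcal{A}$ is arbitrary and the posterior risk $a\mapsto\sum_j\mu^*(P_j\mid X^{(n)})\,\ell_n(P_j,a)$ need not have an attained minimum or a measurable selector. The clean fix is to invoke randomized procedures, for which the minimax theorem is an immediate application of von Neumann's theorem to the finite matrix of risks $\{R_n(P_j,\cdot)\}$; alternatively one takes an $\eta$-minimizer of posterior expected loss for arbitrarily small $\eta$ and absorbs $\eta$ into the $\varepsilon'/2$ slack. Either route adds only formalism, not content, and the constants $c_n,L_n$ are allowed to depend on $n$ since the conclusion is stated for fixed $n$.
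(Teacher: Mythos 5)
Your proposal is correct and follows essentially the same route as the paper's proof: establish the Lipschitz bound $|R_n(P,\delta)-R_n(P',\delta)|\leq(c_n+L_n)\|P-P'\|$ uniformly in $\delta$, pass to the finite net $\mathcal{P}_K^{(n)}$, obtain a near-minimax Bayes procedure there (the paper cites the complete class theorem with an $\varepsilon'$ slack where you invoke von Neumann's minimax theorem and separately flag the attainment issue), and transfer back via the Lipschitz bound. The differences are cosmetic, not substantive.
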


The proof is given in Appendix~\ref{proofs} and can be used to argue that,
under the conditions above, it is always possible (for a type II Bayesian)
to select a prior such that the corresponding Bayesian procedure estimates
both the global and local parameters at their minimax rates.

\begin{corollary}
Consider an estimation problem in which $\mathcal{P}^{(n)}$ satisfies the
conditions of Theorem~\ref{construction}; $\ell_{1n}(P,a)$, $\ell_{2n}(P,a)$
are two loss functions, each satisfying condition \textup{A1}, with Lipschitz constants
$c_{1n}$ and $c_{2n}$, respectively, and
\[
\inf_\delta\max_{P\in\mathcal{P}^{(n)}} \E_P
\ell_{kn}(P,\delta) = O\bigl(b_{kn}^{-1}\bigr),
\quad k = 1, 2,
\]
for some $b_{1n}, b_{2n}$.
Then, given $\varepsilon> 0$, there exist $\mu_n$ on $\mathcal{P}^{(n)}$
such that simultaneously
\[
\max_{P\in\mathcal{P}^{(n)}} \E_P\ell_{kn}(P,
\delta_{\mu_n}) = O\bigl(b_{kn}^{-1}\bigr),\quad k =
1, 2.
\]
\end{corollary}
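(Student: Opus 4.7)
The plan is to reduce the corollary to a direct application of Theorem \ref{construction} via a composite loss. Define
\begin{equation*}
\tilde\ell_n(P,a) := b_{1n}\ell_{1n}(P,a) + b_{2n}\ell_{2n}(P,a),
\end{equation*}
and take $\delta_{\mu_n}$ in the statement of the corollary to be the Bayes procedure with respect to this combined loss. The main work is then to check that $\tilde\ell_n$ still satisfies A1--A3, so that the theorem applies.

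First, I would verify A1 and A2 for $\tilde\ell_n$. Since each $\ell_{kn}$ is Lipschitz with constant $c_{kn}$, the combined loss is Lipschitz with constant $\tilde c_n = b_{1n}c_{1n} + b_{2n}c_{2n}$, and it is bounded by $\tilde L_n = b_{1n}L_{1n} + b_{2n}L_{2n}$; A2 concerns only $\mathcal{P}^{(n)}$ and is unchanged. Next, I would verify A3: that the combined minimax risk
\begin{equation*}
\tilde r_n \;:=\; \inf_\delta \sup_{P\in\mathcal{P}^{(n)}} \E_P\tilde\ell_n(P,\delta(X))
\end{equation*}
is bounded by some $\tilde r < \infty$ uniformly in $n$. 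Using the hypothesis, pick minimax procedures $\delta_k^*$ with $\sup_P\E_P\ell_{kn}(P,\delta_k^*) = O(b_{kn}^{-1})$, and build a single (possibly randomized) procedure $\tilde\delta^*$ that controls both losses simultaneously -- for instance by mixing $\delta_1^*$ and $\delta_2^*$ and using the uniform boundedness $\ell_{kn}\le L_n$ to handle the cross terms, or by a game-theoretic minimax argument phrased in the product action space $\mathcal{A}\times\mathcal{A}$ with loss $(a_1,a_2)\mapsto b_{1n}\ell_{1n}(P,a_1)+b_{2n}\ell_{2n}(P,a_2)$.

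Once A1--A3 are in hand, apply Theorem \ref{construction} to $\tilde\ell_n$ with a fixed $\varepsilon'$ (say $\varepsilon'=1$): this yields a prior $\mu_n$ on $\mathcal{P}^{(n)}$ such that the corresponding Bayes procedure $\delta_{\mu_n}$ satisfies $\sup_P\E_P\tilde\ell_n(P,\delta_{\mu_n}) \le \tilde r_n + 1 = O(1)$. Since $\tilde\ell_n \ge b_{kn}\ell_{kn}$ pointwise for each $k=1,2$, monotonicity of expectation gives
\begin{equation*}
b_{kn}\sup_{P\in\mathcal{P}^{(n)}} \E_P\ell_{kn}(P,\delta_{\mu_n})
\;\le\; \sup_{P\in\mathcal{P}^{(n)}} \E_P\tilde\ell_n(P,\delta_{\mu_n}) = O(1),
\end{equation*}
which rearranges to the desired $\sup_P \E_P\ell_{kn}(P,\delta_{\mu_n}) = O(b_{kn}^{-1})$ for both losses simultaneously.

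The main obstacle is the verification of A3. A naive randomization over $\delta_1^*$ and $\delta_2^*$ produces cross-terms such as $b_{2n}\E_P\ell_{2n}(P,\delta_1^*)$, and the hypothesis does not immediately bound these. Controlling them seems to require either the uniform boundedness hypothesis $\ell_{kn}\le L_n$ together with a careful scaling argument, or passing through the product-action formulation where Theorem \ref{construction} applies to a pair $(\delta_1,\delta_2)$ and the resulting Bayes pair is then summarized as a single rule. Everything else in the proof is bookkeeping around Theorem \ref{construction}.
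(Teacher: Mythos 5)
Your reduction is the right one in outline, but the version of the combined loss you lead with --- a single shared action $a$ with $\tilde\ell_n(P,a)=b_{1n}\ell_{1n}(P,a)+b_{2n}\ell_{2n}(P,a)$ --- cannot be pushed through, and the obstacle you flag at the end (verifying A3) is genuine, not bookkeeping. The hypotheses give you a minimax procedure for each loss separately, but nothing forces a single action to be simultaneously good for both; the cross-terms such as $b_{2n}\E_P\ell_{2n}(P,\delta_1^*)$ are only bounded by $b_{2n}L_{2n}$, which need not be $O(1)$, and no mixing or rescaling of $\delta_1^*$ and $\delta_2^*$ within the same action space repairs this. Indeed, producing a single rule that is good for both losses is essentially the conclusion of the corollary, so assuming it to verify A3 would be circular.

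The alternative you mention only in passing is in fact the paper's entire proof: define the combined loss on the product action space, $\ell_n(P,(a_1,a_2))=b_{1n}\ell_{1n}(P,a_1)+b_{2n}\ell_{2n}(P,a_2)$. Then A1 holds with Lipschitz constant $b_{1n}c_{1n}+b_{2n}c_{2n}$, A2 is untouched, and A3 is immediate with no cross-terms: the pair $(\delta_1^*,\delta_2^*)$ of individually minimax procedures has combined worst-case risk at most $b_{1n}O(b_{1n}^{-1})+b_{2n}O(b_{2n}^{-1})=O(1)$. Applying Theorem~\ref{construction} to this loss yields a prior $\mu_n$ whose Bayes procedure (a pair) has combined risk $O(1)$, and since both summands are nonnegative your final monotonicity step, $b_{kn}\sup_P\E_P\ell_{kn}\leq\sup_P\E_P\ell_n=O(1)$, goes through verbatim. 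So the correct proof is the one you relegated to a parenthetical; you should commit to it and drop the single-action formulation, which is not provable from the stated assumptions.
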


The corollary follows by applying the theorem to the combined loss function
$\ell_n(P,(a_1,a_2)) = b_{1n}\ell_{1n}(P,\allowbreak  a_1) + b_{2n}\ell_{2n}(P,a_2)$.

The conditions essentially hold in our examples (technically, in the
stratified sampling and partial linear model examples, before applying
the theorem, one should restrict the parameter space to a compact set).
However, note that the prior may depend on information that may not be
known
{a priori}, such as the loss function, and on parameters that
``should not'' be part of the loss, such as the weight function in the
stratified sampling example, the (smoothness of the) conditional
expectation of $U$ given $X$ in the partial linear model, and the
linear functional in the white noise model.

Note, however, that the theorem as proven does not say that there exists
a prior such that the two Bayesian estimators for each of the two loss
functions achieve the corresponding minimax rates. Indeed, a single
estimator is produced which balances the two objectives.

\section{Summary}
\label{summary}

In this paper, we presented a few toy examples in which a nonparametric
prior fails to produce estimators of simple functionals that
are $\sqrt{n}$-consistent, in spite of the fact that efficient frequentist
procedures exist (and are often easy to construct). In these examples,
minimal smoothness was assumed, but we do not believe that this is
necessary in order for the Bayesian paradigm to have difficulty with
high-dimensional models. With minimal smoothness, it is easy to
prove that bias accumulates and global functionals cannot be
estimated at minimax rates (while with smoother objects, this would
be more difficult to demonstrate).

Bayesian procedures are always unbiased with the respect to the prior on
which they are based. Bayesian estimators tend to replace parameters buried
in noise by their  {a priori} means. This would be a reasonable strategy
if the prior represented a physical reality, but is not workable if the prior
represents a subjective belief or is selected for computational convenience.
In the latter case, to the extent that the beliefs or assumptions fail to
match the physical reality, the Bayesian paradigm will run into difficulty.

Several difficulties with the Bayesian approach were demonstrated by
our examples, including:
\begin{enumerate}[3.]
\item[1.] The possibility of  {de facto} cross-correlation
between two independent processes, as discussed in
Appendix~\ref{correlation},
is ignored by the Bayesian estimator. The effect of such \emph{spurious
correlations} can be seen in the stratified sampling example of
Section~\ref{stratified}, the partial linear model of
Section~\ref{partial}, and the discussion on estimating linear
functionals in the white noise model of Section~\ref{plugin}. Because
the spurious correlations observed have mean value $0$, the Bayesian
estimators are unbiased, on average, but this average is only with
respect to the prior. In any other sense, the Bayesian estimators are biased.
\item[2.] For linear functionals with squared error loss, the Bayesian
paradigm requires the analyst to follow the \emph{plug-in principle},
estimating functionals $\th$ of high-dimensional parameters $\bt$
by $\tilde{\th}= \th(\tilde\bt)$. The fact that universal plug-in
estimators do not exist shows that strict adherence to the Bayesian
paradigm is too rigid. This was shown in Section~\ref{plugin}.
\item[3.] Having selected a prior, the Bayesian may assume that some
functionals of the unknown parameter are known---for example, weighted
means of many unknown parameters. But, as a matter of fact, these
unverified assumptions, hidden in the selected prior, force the
resulting estimator to be \emph{nonrobust}. See, for example, the
discussion of the partial linear model in Section~\ref{partial}.
\item[4.] On the other hand, replacing components of signal buried
deeply in noise by their prior means may cause an \emph{accumulation
of bias}, destroying estimators of functionals which can be estimated
without bias and with bounded asymptotic variance. This is clear from
the discussion in Section~\ref{norm}.
\item[5.] Finally, the Bayesian paradigm forces the analyst to follow the
strict likelihood principle, cf. \cite{berger-wolpert-1988}, and this may
force him to \emph{ignore auxiliary information} which could be used to produce
asymptotically unbiased, efficient estimators. This was the core of the argument
in the stratified sampling example of Section~\ref{stratified} in which the
type~I Bayesian cannot make use of information on the sampling probabilities,
at all, and cannot produce a $\sqrt{n}$-consistent estimator of the population
mean, in general, as a result. The same is true in the partial linear model
example of Section~\ref{partial}, in which the type I Bayesian analyst cannot make
use of information on smoothness, and in the stopping times example of
Section~\ref{stopping}.
\end{enumerate}

Real-life examples are more complex and less tracta\-ble than the toy
problems we have played with in this paper and, as a result, it would be
more difficult to determine the real-life effect of assumptions hidden
in the prior on the frequentist behavior of Bayesian estimators in such
situations. It is very difficult to build a prior for a very complicated
model. Typically, one would assume a lot of independence. However, with
many independent or nearly-independent components, the law of large numbers
and central limit theorem will take effect, concentrating what was
supposed to have been a vague prior in a small corner of the parameter
space. The
resulting estimator will be efficient for parameters in
this small set, but not in general. It is safe to say that Bayes is
not curse of dimensionality appropriate (or CODA, see
\citeauthor{robins-ritov-1997}, \citeyear{robins-ritov-1997}).

\begin{appendix}

\section{Independent but Correlated Series}\label{correlation}

Much of the analysis in this paper is based on presenting counterexamples
on which a given estimation procedure fails. This is satisfactory from a
minimax frequentist point of view: one example is enough to argue that the
result depends on the unknown parameter and is not uniformly valid, or
asymptotically minimax. However, this may not convince a Bayesian, who
might claim that the counter example is  {a priori} unreasonable. A
typical example of the argument was presented in the stratified sampling
example of Section~\ref{stratified}. This argument can be characterized
by constructing two  {a priori} independent processes ($\bt$ and
$\ga$),
which happen to be ``similar.'' For the Bayesian, this is a very unlikely
event. After all, he assumes that they are independent; for example, one
of them depends on biology and the other on budget constraints. In this
section, we argue that such correlations can actually be quite likely.
\cite{harmeling-toussaint-2007} write: ``Let us now get to the core of
\cite{robins-ritov-1997}. The authors consider uniform unbiasedness of
an estimator. This means that the estimator has to be unbiased for every
possible choice of $\th$ and $\xi$. In the experiment, we performed above,
though, we chose $\th$ and $\xi$ independently, and thus it was very
unlikely that we ended up with an accidentally correlated $\th$ and
$\xi$, for example, where $\th$ tends to be large whenever also $\xi$
is (or
inversely).'' (We should remark that they consider also a scenario in
which the processes are correlated.) We claim that this criticism ignores
the fact that two processes can be independent and yet, with high
probability, have an empirical cross-correlation which is far from $0$.
This would be the case, for example, if the processes are nonergodic
and have similar autocorrelation functions.

Suppose $U_1,\ldots,U_n$ and $V_1,\ldots,V_n$ are two independent simple
random walks. Then of course $U_n$ and $V_n$ are uncorrelated. But we
may consider the correlation between these two series
$R=n^{-1}\sum_{i=1}^n (U_i-\bar U_n)(V_i-\bar V_n)$, where $\bar U_n$ and
$\bar V_n$ are the empirical means of the two series, respectively.
$R$ is a random variable and clearly it has mean 0. However, it is far
from being close to $0$, even if $n$ is large. In fact, asymptotically,
$R$ is almost uniformly distributed on most of the interval $(-1,1)$;
cf. \cite{mcshane-wyner-2011}. The reason for this somewhat surprising
fact is that random walks and Brownian motions are less wild than they
are sometimes thought to be. In fact given $U_n$, the best predictor of
$U_{\lfloor n/2\rfloor}$ is $U_n/2$, where $\lfloor a\rfloor$ is the
largest integer less than $a$, and the sequence tends to be, very
roughly speaking, monotone. But if both $U_1,\ldots,U_n$ and
$V_1,\ldots,V_n$ are ``somewhat'' monotone, then they will be
cross-correlated; maybe positively correlated, maybe negatively, but
rarely uncorrelated. Consider now two general, independent mean $0$ random,
nonmixing sequences $U_1,\ldots,U_n$ and $V_1,\ldots,V_n$. Suppose that
the two sequences have the autocorrelation functions
$A(i,j)=\operatorname{cov}(U_i,U_j)$ and $B(i,j)=\operatorname{cov}(V_i,V_j)$,
where we assume $\operatorname{var}(U_i) = \operatorname{var}(V_i) = 1$ (although, in
the standard engineering usage, autocorrelation refers to what some
would like to call autocovariance).
We do not assume that the series are stationary, and we do not know their
autocorrelation functions. The picture we have in mind is that each
$(U_i,V_i)$ is a characteristic of points in a large graph, and neighboring
nodes are highly correlated, but we do not know the neighborhood structure
of the graph. Define
\[
R = \langle U,V\rangle_0 \equiv n^{-1}\sum
_{i=1}^n U_i V_i -
n^{-2}\sum_{i=1}^n
U_i\sum_{i=1}^n
V_i,
\]
where $\langle\cdot,\cdot\rangle_0$ is the empirical cross-covariance between
two sequences. Then $\E R=0$, while direct calculations give
\begin{eqnarray*}
\operatorname{var}(R) &=& n^{-1} \sum_{i=1}^n
\bigl\langle A(i,\cdot), B(i,\cdot) \bigr\rangle_0
\\
&&{}- \Biggl\langle
n^{-1}\sum_{j=1}^n A(\cdot,j),
n^{-1}\sum_{j=1}^n B(\cdot,j)
\Biggr\rangle_0.
\end{eqnarray*}
To get some sense of the size of $\operatorname{var}(R)$, suppose that
$n^{-1}\sum_{j=1}^n A(i,j)\equiv n^{-1}\sum_{j=1}^n B(i,j)\equiv c$.
Then we get
\[
\operatorname{var}(R) = n^{-2}\sum_{i=1}^n
\sum_{j=1}^n \bigl(A(i,j)-c \bigr)
\bigl(B(i,j)-c \bigr).
\]
Clearly, if the two series are mixing and $\sum_j A(i,j)=\sum_jB(i,j)=O(1)$,
then $\operatorname{var}(R)=O(n^{-1})$. However, if they are not mixing, and have
similar autocorrelation functions, then most realizations of these two series
will have nonzero cross-correlation.

\section{Proofs}\label{proofs}

\setcounter{equation}{7}
\renewcommand{\theequation}{\arabic{equation}}

\begin{pf*}{Proof of Proposition~\ref{linearminimax}}
Clearly,
\begin{eqnarray*}
\E\sum_{i=1}^{\infty} (\widehat
\bt_i-\bt_i )^2 &=& \bigl\lfloor
n^{1/2\alpha}\bigr\rfloor/n + \sum_{i > n^{1/2\alpha}}
\bt_i^2
\\
&\leq& n^{-(2\alpha-1)/2\alpha} + \sum_{i > n^{1/2\alpha}}
i^{-2\alpha}
\\
&\leq&2\alpha n^{-(2\alpha-1)/2\alpha}/(2\alpha- 1).
\end{eqnarray*}
That this is the minimax rate is established by considering the
prior $\Pi$ which makes $\bt_1,\bt_2,\ldots$ independent, with
$\Pi (\bt_i = \pm i^{-\alpha} ) = 1/2$.
\end{pf*}

\begin{pf*}{Proof of Lemma~\ref{bayesbias}}
First note that because of the monotone likelihood ratio property,
$\widehat{\th}(x)$ is a monotone increasing function of $x$. We have
\begin{eqnarray*}
1 + \dot b_{\th} &=& \partial\E_{\th}\E_{\pi} (
\Theta| X ) /\partial\th
\\
&=& \frac{\partial}{\partial\th} \E_{\th}\frac{\int t e^{-(X-t)^2/2\sigma^2}\,d\pi(t)}{
\int e^{-(X-t)^2/2\sigma^2}\,d\pi(t)},
\end{eqnarray*}
where $\E_{\th}$ is the expectation assuming the true value of the parameter
is $\th$, $(\Theta,X)$ is a pair of random variables such that $\Theta
\sim\pi$,
and $X|\Theta\sim N(\Theta,\sigma^2)$ and $\E_\pi$ is the expectation under
this joint distribution. Note that $\E_\pi$ is a formal expression
since we
assume that $X\sim N(\th,\sigma^2)$. Let $Z\sim N(0,\sigma^2)$ then
\begin{eqnarray*}
&&1 + \dot b_{\th}
\\
&&\quad =  \frac{\partial}{\partial\th} \E_{\th}
\frac{\int t e^{-(Z+\th-t)^2/2\sigma^2}\,d\pi(t)}{
\int e^{-(Z+\th-t)^2/2\sigma^2}\,d\pi(t)}
\\
&&\quad =  \frac{1}{\sigma^2} \E \biggl\{\frac{\int t(t-Z-\th) e^{-(Z+\th-t)^2/2\sigma^2}\,d\pi(t)}{
\int e^{-(Z+\th-t)^2/2\sigma^2}\,d\pi(t)}
\\
&&\phantom{\quad =  \frac{1}{\sigma^2} \E \biggl\{}{} - \frac{\int t e^{-(Z+\th-t)^2/2\sigma^2}\,d\pi(t)}{
\int e^{-(Z+\th-t)^2/2\sigma^2}\,d\pi(t)}
\\
&&  \phantom{\quad =  \frac{1}{\sigma^2} \E \biggl\{-\,\,\,}{}\cdot\frac{\int(t-Z-\th) e^{-(Z+\th-t)^2/2\sigma^2}\,d\pi(t)}{
\int e^{-(Z+\th-t)^2/2\sigma^2}\,d\pi(t)} \biggr\}
\\
&&\quad =  \frac{1}{\sigma^2} \E_{\th}\bigl\{\operatorname{var}(\Theta| X)
\bigr\}.
\end{eqnarray*}
Hence, $0\leq1+\dot b_{\th}\leq(a/\sigma)^2$, or $\dot b_{\th}\in
[-1,-(1-(a/\sigma)^2)]$.
The lemma then follows from the mean value theorem.
\end{pf*}

\begin{pf*}{Proof of Theorem~\ref{l2fail}}
Let $\bb\sim\pi$. For any $i$, let $F_i$ be the distribution of
$b_i=\E ({\bt_i^2|X_{-i}} )$. Note that $b_i$ and $\bt_i$
are independent given $\bb_{-i}$. By assumption, conditionally on
$\bb_{-i}$, $P_{\pi_i\times F_i}(|\bt_i^2-b_i|>i^{2\alpha}/4)>\eta$.
But then it follows from \eqref{l2bi} that for $n$ large enough,
$P_\pi(|\bt_i^2-\widehat\bt_i^2|>i^{2\alpha}/4)>\eta/2$.
Let $c_i'(\bb) = \mathbf{1} \{E_{\bb}(\widehat\bt^2_i-\bt^2_i)
< -i^{2\alpha}/4 \}$, $c_i''(\bb) = \mathbf{1} \{E_{\bb}
(\widehat\bt^2_i-\bt^2_i) > i^{2\alpha}/4 \}$, and
\[
\hspace*{10pt}c_i(\bb) = %
\cases{ c_i'(\bb),
\cr
\quad\hspace*{12pt}\displaystyle\sum_{i=n^{1/2\alpha+ \nu}}^m c_i'(
\bb) > \eta /3\bigl(m-n^{1/2\alpha+ \nu}\bigr),
\cr
c_i''(
\bb), \quad \mbox{otherwise}. } %
\]
Now, $\sum c_i(\bb)\widehat\bt_i^2$ picks exactly those $\bt_i^2$ which are
estimated with bias, positive bias if $c_i = c_i''$ and negative if
$c_i = c_i'$.
\end{pf*}

\begin{pf*}{Proof of Theorem~\ref{bayescons}}
The proof is based on the two lemmas which follow.
Suppose the posterior is not $d_n$ consistent on $\mathcal{B}'$ with
$\pi(\mathcal{B}') > 0$. Then,  by Lemma~\ref{bayescons1}, \eqref{lemc1}
must hold for $\bt_0\in\mathcal{B}'$. By Lem\-ma~\ref{bayescons2}, \eqref{lemc22}
must hold. But \eqref{lemc22} contradicts $\pi(\mathcal{B})=1$, since
then, for
all $M$, we have $\pi\{\bt: P_\bt(d_n(\tilde\bt_n,\bt)\ge M)\} > 0$.
\end{pf*}

Recall that $\bt_0$ is the true parameter. It has a prior probability
$\pi$.
$\bt_n$ is a random variable which, given the data $X_n$, has the posterior
distribution $\pi_{X_n}$. The first lemma says that if there is a $d_n$
consistent
estimator, but $d_n(\bt_n,\bt_0)$ is not tight, then neither is
$d_n(\bt_n,\tilde\bt_n)$.

\begin{lemma}\label{bayescons1}
Suppose that:
\begin{enumerate}[2.]
\item[1.]
There\hspace*{-0.5pt} is\hspace*{-0.5pt} a\hspace*{-0.5pt} statistic\hspace*{-0.5pt} $\tilde\bt_n$\hspace*{-0.5pt} such\hspace*{-0.5pt} that\hspace*{-0.5pt}
$\limsup_n\hspace*{-0.5pt} P_{\bt_0} (d_n(\tilde\bt_n,\allowbreak  \bt_0)\geq M
)\rightarrow0$
as $M\rightarrow\infty$.
\item[2.] For all $M<\infty$,
$\limsup_n P_{\bt_0} (\pi_{X_n}(d_n(\bt_n,\bt_0)\geq2M)\geq
2\varepsilon )\geq2d$.
\end{enumerate}
Then there is an $M$ which may depend on $\bt_0$ such that
%
\begin{equation}
\label{lemc1} \limsup_{n\rightarrow\infty} P_{\bt_0} \bigl(
\pi_{X_n}\bigl(d_n(\bt_n,\tilde
\bt_n)\geq M\bigr) \geq\varepsilon \bigr)\geq d.
\end{equation}
\end{lemma}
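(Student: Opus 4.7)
The plan is to combine two simple ingredients: the triangle inequality for $d_n$ (to translate ``posterior far from $\bt_0$'' into ``posterior far from $\tilde\bt_n$'' whenever $\tilde\bt_n$ happens to be near $\bt_0$), together with the frequentist consistency of $\tilde\bt_n$ under $P_{\bt_0}$ (to ensure that the ``good'' event $\{\tilde\bt_n\ \text{near}\ \bt_0\}$ has high enough $P_{\bt_0}$-probability). The factor-of-two slack built into hypothesis~(2) (i.e.\ $2M$, $2\varepsilon$, $2d$) is exactly what is needed to absorb these two losses.

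Concretely, I would proceed as follows. First, using hypothesis~(1), pick $M=M(\bt_0)<\infty$ large enough that
$P_{\bt_0}\bigl(d_n(\tilde\bt_n,\bt_0)\geq M\bigr)<d$
for all sufficiently large $n$; call this event $A_n^c$, so $P_{\bt_0}(A_n)\geq 1-d$ eventually. Next, apply hypothesis~(2) with this particular $M$ to get a subsequence along which
$P_{\bt_0}(B_n)\geq 2d$, where
$B_n=\bigl\{\pi_{X_n}(d_n(\bt_n,\bt_0)\geq 2M)\geq 2\varepsilon\bigr\}$.
On $A_n$ the statistic $\tilde\bt_n$ satisfies $d_n(\tilde\bt_n,\bt_0)<M$; since $\tilde\bt_n$ is measurable with respect to $X_n$, the triangle inequality applied pointwise in $\bt_n$ under the posterior gives the inclusion of events (inside $\pi_{X_n}$)
\[
\{d_n(\bt_n,\bt_0)\geq 2M\}\subseteq \{d_n(\bt_n,\tilde\bt_n)\geq M\}
\qquad\text{on }A_n,
\]
and hence $\pi_{X_n}\bigl(d_n(\bt_n,\tilde\bt_n)\geq M\bigr)\geq \pi_{X_n}\bigl(d_n(\bt_n,\bt_0)\geq 2M\bigr)$ on $A_n$.

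Combining, on $A_n\cap B_n$ we have $\pi_{X_n}\bigl(d_n(\bt_n,\tilde\bt_n)\geq M\bigr)\geq 2\varepsilon\geq\varepsilon$, so for all large $n$ on the chosen subsequence
\[
P_{\bt_0}\!\left(\pi_{X_n}(d_n(\bt_n,\tilde\bt_n)\geq M)\geq\varepsilon\right)
 \geq P_{\bt_0}(A_n\cap B_n)\geq P_{\bt_0}(B_n)-P_{\bt_0}(A_n^c)
 \geq 2d-d=d.
\]
Passing to $\limsup_n$ yields \eqref{lemc1}, proving the lemma.

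There is no serious technical obstacle here; the only thing to be careful about is that $\tilde\bt_n$ is $X_n$-measurable so that conditioning on $X_n$ treats it as a constant inside $\pi_{X_n}$, which makes the triangle-inequality inclusion of posterior events legitimate. The rest is elementary probability bookkeeping, designed exactly to consume the three factors of two provided by the hypothesis.
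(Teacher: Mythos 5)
Your proposal is correct and follows essentially the same route as the paper: the paper's proof is exactly the chain $P_{\bt_0}\left(\pi_{X_n}(d_n(\bt_n,\tilde\bt_n)\geq M)\geq\varepsilon\right)\geq P_{\bt_0}(B_n\cap A_n)\geq P_{\bt_0}(B_n)-P_{\bt_0}(A_n^c)$, with the triangle-inequality inclusion of posterior events on $A_n$ left implicit and the choice of $M$ from hypothesis (1) consuming one factor of $d$. Your write-up merely makes explicit the $X_n$-measurability of $\tilde\bt_n$ and the subsequence extraction from the $\limsup$, which the paper glosses over.
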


\begin{pf}
\begin{eqnarray*}
&&P_{\bt_0} \bigl(\pi_{X_n}\bigl(d_n(
\bt_n,\tilde\bt_n)\geq M\bigr)\geq\varepsilon \bigr)
\\
&&\quad \geq P_{\bt_0} \bigl(\bigl\{\pi_{X_n}
\bigl(d_n(\bt_n,\bt_0)\geq2M\bigr)\geq 2
\varepsilon\bigr\}
\\
&&\phantom{\quad \geq P_{\bt_0} \bigl(}\cap\bigl\{d_n(\tilde\bt_n,
\bt_0)\leq M\bigr\} \bigr)
\\
&&\quad \geq P_{\bt_0} \bigl(\pi_{X_n}\bigl(d_n(
\bt_n,\bt_0)\geq2M\bigr)\geq2\varepsilon \bigr)
\\
&&\phantom{\quad \geq}{} -
P_{\bt_0} \bigl(d_n(\tilde\bt_n,
\bt_0)\geq M \bigr).
\end{eqnarray*}
By assumption, the lim\,sup of the first term on the right-hand side is bounded
by $2d$, while we can choose $M$ large enough such that the second term on
the right-hand side is bounded by $d$ for all $n$ large enough. The
lemma follows.
\end{pf}

The reverse is given in the following lemma.

\begin{lemma}\label{bayescons2}
Suppose there is a statistic $\tilde\bt_n$ and $M, \varepsilon, d > 0$
such that
%
\begin{equation}
\label{lemc21} P_{\bt_0} \bigl(\pi_{X_n}\bigl(d_n(
\tilde\bt_n,\bt_n)\geq M\bigr) \geq\varepsilon \bigr)\geq
d
\end{equation}
for all $\bt_0\in\mathcal{B}'$ and $\pi(\mathcal{B}')\geq\gamma> 0$.
Then for all $M < \infty$,
%
\begin{equation}
\label{lemc22} P\bigl(d_n(\tilde\bt_n,\bt_0)
\geq M\bigr)\geq\varepsilon \,d\gamma.
\end{equation}
\end{lemma}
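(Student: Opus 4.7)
The plan is to exploit the conditional exchangeability of $\bt_0$ and $\bt_n$ given $X_n$ and then integrate the resulting posterior bound against the prior. By construction of the Markov chain $\bt_0 \to X_n \to \bt_n$, Bayes' rule tells us that given $X_n$ both $\bt_0$ and $\bt_n$ are iid draws from the posterior $\pi_{X_n}$. Since $\tilde\bt_n$ is a function of $X_n$ alone, this symmetry yields, almost surely,
$$P\bigl(d_n(\tilde\bt_n,\bt_0)\ge M\mid X_n\bigr)=\pi_{X_n}\bigl(d_n(\tilde\bt_n,\cdot)\ge M\bigr).$$

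First I would integrate this identity against the marginal law of $X_n$ to obtain the key reformulation
$$P\bigl(d_n(\tilde\bt_n,\bt_0)\ge M\bigr)=\E\bigl[\pi_{X_n}\bigl(d_n(\tilde\bt_n,\cdot)\ge M\bigr)\bigr]=\int_{\mathcal{B}}\E_{\bt_0}\bigl[\pi_{X_n}\bigl(d_n(\tilde\bt_n,\cdot)\ge M\bigr)\bigr]\,\pi(d\bt_0),$$
where the second equality is obtained by conditioning on $\bt_0$.

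Next, for each fixed $\bt_0\in\mathcal{B}'$, I would apply the trivial Markov-type inequality $\E_{\bt_0}[Y]\ge \varepsilon\,P_{\bt_0}(Y\ge\varepsilon)$ to the $[0,1]$-valued random variable $Y=\pi_{X_n}\bigl(d_n(\tilde\bt_n,\cdot)\ge M\bigr)$; the hypothesis \eqref{lemc21} then gives
$$\E_{\bt_0}\bigl[\pi_{X_n}\bigl(d_n(\tilde\bt_n,\cdot)\ge M\bigr)\bigr]\ge\varepsilon d.$$

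Finally I would integrate this lower bound over $\bt_0\sim\pi$, discarding the non-negative contribution from $\mathcal{B}\setminus\mathcal{B}'$:
$$P\bigl(d_n(\tilde\bt_n,\bt_0)\ge M\bigr)\ge\int_{\mathcal{B}'}\varepsilon d\,\pi(d\bt_0)=\varepsilon d\,\pi(\mathcal{B}')\ge\varepsilon d\gamma,$$
which is precisely \eqref{lemc22}. The argument is essentially bookkeeping; the only conceptual ingredient is the conditional iid-ness of $\bt_0$ and $\bt_n$ given $X_n$, which is merely a restatement of the definition of the posterior, so no real obstacle arises.
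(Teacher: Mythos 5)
Your proof is correct and is essentially the paper's own argument made explicit: the paper's one-line proof via the iterated-expectation identity $E(E(E(U|V)|W))=E(U)$ is exactly your chain of (i) identifying $\pi_{X_n}\bigl(d_n(\tilde\bt_n,\cdot)\ge M\bigr)$ with $P\bigl(d_n(\tilde\bt_n,\bt_0)\ge M\mid X_n\bigr)$ using the conditional iid-ness of $\bt_0$ and $\bt_n$ given $X_n$, (ii) applying the Markov-type bound under $P_{\bt_0}$ to get $\E_{\bt_0}[Y]\ge\varepsilon d$, and (iii) integrating against $\pi$ restricted to $\mathcal{B}'$. No gaps.
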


\begin{pf}
If $U,V,W$ are three random variables, then $E(E(E(U|V)|W))=E(U)$.
Computing the expected value of \eqref{lemc21}, we obtain \eqref{lemc22}.
\end{pf}

\begin{pf*}{Proof of Theorem~\ref{construction}}
Let $P,P'\in\mathcal{P}^{(n)}$. Then
\begin{eqnarray*}
&&\bigl |R_n(P,\delta) -R_n\bigl(P',\delta\bigr)\bigr |
\\
&&\quad= \biggl\llvert \int\ell \bigl(p, \delta(x) \bigr) p(x) \,d\nu(x)
\\
&&\phantom{\quad= \biggl\llvert}{}- \int
\ell \bigl(p',\delta(x) \bigr) p'(x) \,d\nu(x)\biggr
\rrvert
\\
&&\quad \leq\int\bigl\llvert \ell\bigl(p,\delta(x)\bigr) - \ell
\bigl(p',\delta(x)\bigr)\bigr\rrvert p(x) \,d\nu(x)
\\
&&\qquad{} + \int\ell\bigl(p',\delta(x)\bigr)\bigl\llvert
p(x)-p'(x)\bigr\rrvert \,d\nu(x).
\end{eqnarray*}
The first term on the right-hand side is bounded by $c_n\|P-P'\|$, and
the second by $L_n\|P-P'\|$, so that
%
\begin{equation}
\label{pb2} \qquad\bigl |R_n(P,\delta)-R_n\bigl(P',
\delta\bigr)\bigr | \leq(c_n+L_n)\bigl \|P-P'\bigr \|,
\end{equation}
for all $\delta$, $P$, and $P'$.

By the complete class theorem, for any $\varepsilon' > 0$ there is
a $\mu_n$ supported on $\mathcal{P}_K^{(n)}$ such that
%
\begin{equation}
\label{pb3} \quad\max_{P\in\mathcal{P}_K^{(n)}} R_n(P,
\delta_{\mu_n}) \leq\inf_{\delta}\max_{P\in\mathcal{P}_K^{(n)}}
R_n(P,\delta) + \varepsilon'.
\end{equation}
By \eqref{pb2}, we also have
%
\begin{eqnarray}
\label{pb4} &&\Bigl |\max_{P\in\mathcal{P}_K^{(n)}} R_n(P,\delta) - \max
_{P\in\mathcal{P}^{(n)}} R_n(P,\delta) \Bigr |
\nonumber
\\[-8pt]
\\[-8pt]
&&\quad\leq(c_n+L_n)
\varepsilon.\nonumber
\end{eqnarray}
Combining \eqref{pb3} and \eqref{pb4}, applied to $\delta$ and
$\delta_{\mu_n}$,
\begin{eqnarray*}
&&\max_{P\in\mathcal{P}^{(n)}} R_n(P,\delta_{\mu_m})
\\
&&\quad\leq\inf
_{\delta}\max_{P\in\mathcal{P}^{(n)}} R_n(P,\delta) +
(c_n+L_n)\varepsilon+ \varepsilon'.
\end{eqnarray*}
Since $\varepsilon,\varepsilon' > 0$ are arbitrary, the assertion
follows.
\end{pf*}
\end{appendix}

\section*{Acknowledgments}
Y. Ritov was supported by an ISF grant. P. J. Bickel was supported
by the National Science Foundation under Grant DMS-1160319.
A. C. Gamst was supported by the National Science Foundation under Grant DMS-0906808,
and by the Assistant Secretary for Energy Efficiency and Renewable Energy under Contact No. DE-AC02-05CH11231,
specifically the Materials Project is supported by Department
of Energy's Basic Energy Sciences program under Grant EDCBEE.


%


\begin{thebibliography}{49}

\bibitem[\protect\citeauthoryear{Bayarri and Berger}{2004}]{bayarri-berger-2004}
%
\begin{barticle}[mr]
\bauthor{\bsnm{Bayarri},~\bfnm{M.~J.}\binits{M.~J.}} \AND
\bauthor{\bsnm{Berger},~\bfnm{J.~O.}\binits{J.~O.}}
(\byear{2004}).
\btitle{The interplay of {B}ayesian and frequentist analysis}.
\bjournal{Statist. Sci.}
\bvolume{19}
\bpages{58--80}.
\bid{doi={10.1214/088342304000000116}, issn={0883-4237}, mr={2082147}}
\end{barticle}
%
\bptok{imsref}%
\endbibitem

\bibitem[\protect\citeauthoryear{Berger}{2006a}]{berger-2006-a}
%
\begin{barticle}[mr]
\bauthor{\bsnm{Berger},~\bfnm{James}\binits{J.}}
(\byear{2006}a).
\btitle{The case for objective {B}ayesian analysis}.
\bjournal{Bayesian Anal.}
\bvolume{1}
\bpages{385--402}.
\bid{issn={1936-0975}, mr={2221271}}
\end{barticle}
%
\bptok{imsref}%
\endbibitem

\bibitem[\protect\citeauthoryear{Berger}{2006b}]{berger-2006-b}
%
\begin{barticle}[mr]
\bauthor{\bsnm{Berger},~\bfnm{James}\binits{J.}}
(\byear{2006}b).
\btitle{Rejoinder}.
\bjournal{Bayesian Anal.}
\bvolume{1}
\bpages{457--464}.
\bid{issn={1936-0975}, mr={2221281}}
\end{barticle}
%
\bptok{imsref}%
\endbibitem

\bibitem[\protect\citeauthoryear{Berger and Wolpert}{1988}]{berger-wolpert-1988}
%
\begin{bbook}[auto:STB|2014/06/18|12:29:53]
\bauthor{\bsnm{Berger},~\bfnm{J.~O.}\binits{J.~O.}} \AND
\bauthor{\bsnm{Wolpert},~\bfnm{R.~L.}\binits{R.~L.}}
(\byear{1988}).
\btitle{The Likelihood Principle: A Review, Generalizations, and
Statistical Implications},
\bedition{2nd} ed.
\bseries{Lecture Notes---Monograph Series}
\bvolume{6}.
\bpublisher{IMS},
\blocation{Hayward, CA}.
\end{bbook}
%
\bptok{imsref}%
\endbibitem

\bibitem[\protect\citeauthoryear{Berry, Reese and
Larkey}{1999}]{berry-etal-1999}
%
\begin{barticle}[auto:STB|2014/06/18|12:29:53]
\bauthor{\bsnm{Berry},~\bfnm{S.~M.}\binits{S.~M.}},
\bauthor{\bsnm{Reese},~\bfnm{C.~S.}\binits{C.~S.}} \AND
\bauthor{\bsnm{Larkey},~\bfnm{P.~D.}\binits{P.~D.}}
(\byear{1999}).
\btitle{Bridging different eras in sports}.
\bjournal{J. Amer. Statist. Assoc.}
\bvolume{84}
\bpages{661--676}.
\end{barticle}
%
\bptok{imsref}%
\endbibitem

\bibitem[\protect\citeauthoryear{Bickel}{1981}]{bickel-1981}
%
\begin{barticle}[mr]
\bauthor{\bsnm{Bickel},~\bfnm{P.~J.}\binits{P.~J.}}
(\byear{1981}).
\btitle{Minimax estimation of the mean of a normal distribution when
the parameter space is restricted}.
\bjournal{Ann. Statist.}
\bvolume{9}
\bpages{1301--1309}.
\bid{issn={0090-5364}, mr={0630112}}
\end{barticle}
%
\bptok{imsref}%
\endbibitem

\bibitem[\protect\citeauthoryear{Bickel and Kleijn}{2012}]{bickel-kleijn-2012}
%
\begin{barticle}[mr]
\bauthor{\bsnm{Bickel},~\bfnm{P.~J.}\binits{P.~J.}} \AND
\bauthor{\bsnm{Kleijn},~\bfnm{B.~J.~K.}\binits{B.~J.~K.}}
(\byear{2012}).
\btitle{The semiparametric {B}ernstein--von {M}ises theorem}.
\bjournal{Ann. Statist.}
\bvolume{40}
\bpages{206--237}.
\bid{doi={10.1214/11-AOS921}, issn={0090-5364}, mr={3013185}}
\end{barticle}
%
\bptok{imsref}%
\endbibitem

\bibitem[\protect\citeauthoryear{Bickel and Ritov}{1988}]{bickel-ritov-1988}
%
\begin{barticle}[mr]
\bauthor{\bsnm{Bickel},~\bfnm{P.~J.}\binits{P.~J.}} \AND
\bauthor{\bsnm{Ritov},~\bfnm{Y.}\binits{Y.}}
(\byear{1988}).
\btitle{Estimating integrated squared density derivatives: Sharp best
order of convergence estimates}.
\bjournal{Sankhy\=a Ser. A}
\bvolume{50}
\bpages{381--393}.
\bid{issn={0581-572X}, mr={1065550}}
\end{barticle}
%
\bptok{imsref}%
\endbibitem

\bibitem[\protect\citeauthoryear{Bickel and Ritov}{2003}]{bickel-ritov-2003}
%
\begin{barticle}[mr]
\bauthor{\bsnm{Bickel},~\bfnm{Peter~J.}\binits{P.~J.}} \AND
\bauthor{\bsnm{Ritov},~\bfnm{Ya'acov}\binits{Y.}}
(\byear{2003}).
\btitle{Nonparametric estimators which can be ``plugged-in''}.
\bjournal{Ann. Statist.}
\bvolume{31}
\bpages{1033--1053}.
\bid{doi={10.1214/aos/1059655904}, issn={0090-5364}, mr={2001641}}
\end{barticle}
%
\bptok{imsref}%
\endbibitem

\bibitem[\protect\citeauthoryear{Bickel et~al.}{1998}]{bickel-etal-1998}
%
\begin{bbook}[auto:STB|2014/06/18|12:29:53]
\bauthor{\bsnm{Bickel},~\bfnm{P.~J.}\binits{P.~J.}},
\bauthor{\bsnm{Klaassen},~\bfnm{C.~A.~J.}\binits{C.~A.~J.}},
\bauthor{\bsnm{Ritov},~\bfnm{Y.}\binits{Y.}} \AND
\bauthor{\bsnm{Wellner},~\bfnm{J.~A.}\binits{J.~A.}}
(\byear{1998}).
\btitle{Efficient and Adaptive Estimation in Semiparametric Models}.
\bpublisher{Springer},
\blocation{New York}.
\end{bbook}
%
\bptok{imsref}%
\endbibitem

\bibitem[\protect\citeauthoryear{Bock}{2004}]{bock-2004}
%
\begin{bincollection}[mr]
\bauthor{\bsnm{Bock},~\bfnm{Mary~Ellen}\binits{M.~E.}}
(\byear{2004}).
\btitle{Conversations with {H}erman {R}ubin}.
In \bbooktitle{A Festschrift for {H}erman {R}ubin}.
\bseries{Institute of Mathematical Statistics Lecture Notes---Monograph Series}
\bvolume{45}
\bpages{408--417}.
\bpublisher{IMS},
\blocation{Beachwood, OH}.
\bid{doi={10.1214/lnms/1196285408}, mr={2126915}}
\end{bincollection}
%
\bptok{imsref}%
\endbibitem

\bibitem[\protect\citeauthoryear{Brown and Low}{1996}]{brown-low-1996}
%
\begin{barticle}[mr]
\bauthor{\bsnm{Brown},~\bfnm{Lawrence~D.}\binits{L.~D.}} \AND
\bauthor{\bsnm{Low},~\bfnm{Mark~G.}\binits{M.~G.}}
(\byear{1996}).
\btitle{Asymptotic equivalence of nonparametric regression and white noise}.
\bjournal{Ann. Statist.}
\bvolume{24}
\bpages{2384--2398}.
\bid{doi={10.1214/aos/1032181159}, issn={0090-5364}, mr={1425958}}
\end{barticle}
%
\bptok{imsref}%
\endbibitem

\bibitem[\protect\citeauthoryear{Chen and Shiau}{1994}]{chen-shiau-1994}
%
\begin{barticle}[mr]
\bauthor{\bsnm{Chen},~\bfnm{Hung}\binits{H.}} \AND
\bauthor{\bsnm{Shiau},~\bfnm{Jyh~Jen~Horng}\binits{J.~J.~H.}}
(\byear{1994}).
\btitle{Data-driven efficient estimators for a partially linear model}.
\bjournal{Ann. Statist.}
\bvolume{22}
\bpages{211--237}.
\bid{doi={10.1214/aos/1176325366}, issn={0090-5364}, mr={1272081}}
\end{barticle}
%
\bptok{imsref}%
\endbibitem

\bibitem[\protect\citeauthoryear{Cochran}{1977}]{cochran-1977}
%
\begin{bbook}[mr]
\bauthor{\bsnm{Cochran},~\bfnm{William~G.}\binits{W.~G.}}
(\byear{1977}).
\btitle{Sampling Techniques},
\bedition{3rd} ed.
\bpublisher{Wiley},
\blocation{New York}.
\bid{mr={0474575}}
\end{bbook}
%
\bptok{imsref}%
\endbibitem

\bibitem[\protect\citeauthoryear{Cox}{1993}]{cox-1993}
%
\begin{barticle}[mr]
\bauthor{\bsnm{Cox},~\bfnm{Dennis~D.}\binits{D.~D.}}
(\byear{1993}).
\btitle{An analysis of {B}ayesian inference for nonparametric regression}.
\bjournal{Ann. Statist.}
\bvolume{21}
\bpages{903--923}.
\bid{doi={10.1214/aos/1176349157}, issn={0090-5364}, mr={1232525}}
\end{barticle}
%
\bptok{imsref}%
\endbibitem

\bibitem[\protect\citeauthoryear{Diaconis and Freedman}{1993}]{diaconis-freedman-1993}
%
\begin{barticle}[mr]
\bauthor{\bsnm{Diaconis},~\bfnm{P.}\binits{P.}} \AND
\bauthor{\bsnm{Freedman},~\bfnm{D.~A.}\binits{D.~A.}}
(\byear{1993}).
\btitle{Nonparametric binary regression: A {B}ayesian approach}.
\bjournal{Ann. Statist.}
\bvolume{21}
\bpages{2108--2137}.
\bid{doi={10.1214/aos/1176349413}, issn={0090-5364}, mr={1245784}}
\end{barticle}
%
\bptok{imsref}%
\endbibitem

\bibitem[\protect\citeauthoryear{Diaconis and Freedman}{1998}]{diaconis-freedman-1998}
%
\begin{barticle}[mr]
\bauthor{\bsnm{Diaconis},~\bfnm{Persi~W.}\binits{P.~W.}} \AND
\bauthor{\bsnm{Freedman},~\bfnm{David}\binits{D.}}
(\byear{1998}).
\btitle{Consistency of {B}ayes estimates for nonparametric regression:
Normal theory}.
\bjournal{Bernoulli}
\bvolume{4}
\bpages{411--444}.
\bid{doi={10.2307/3318659}, issn={1350-7265}, mr={1679791}}
\end{barticle}
%
\bptok{imsref}%
\endbibitem

\bibitem[\protect\citeauthoryear{Donoho and
Johnstone}{1994}]{donoho-johnstone-1994}
%
\begin{barticle}[mr]
\bauthor{\bsnm{Donoho},~\bfnm{David~L.}\binits{D.~L.}} \AND
\bauthor{\bsnm{Johnstone},~\bfnm{Iain~M.}\binits{I.~M.}}
(\byear{1994}).
\btitle{Minimax risk over {$l_p$}-balls for {$l_q$}-error}.
\bjournal{Probab. Theory Related Fields}
\bvolume{99}
\bpages{277--303}.
\bid{doi={10.1007/BF01199026}, issn={0178-8051}, mr={1278886}}
\end{barticle}
%
\bptok{imsref}%
\endbibitem

\bibitem[\protect\citeauthoryear{Donoho and
Johnstone}{1995}]{donoho-johnstone-1995}
%
\begin{barticle}[mr]
\bauthor{\bsnm{Donoho},~\bfnm{David~L.}\binits{D.~L.}} \AND
\bauthor{\bsnm{Johnstone},~\bfnm{Iain~M.}\binits{I.~M.}}
(\byear{1995}).
\btitle{Adapting to unknown smoothness via wavelet shrinkage}.
\bjournal{J. Amer. Statist. Assoc.}
\bvolume{90}
\bpages{1200--1224}.
\bid{issn={0162-1459}, mr={1379464}}
\end{barticle}
%
\bptok{imsref}%
\endbibitem

\bibitem[\protect\citeauthoryear{Engle et~al.}{1986}]{engle-1986}
%
\begin{barticle}[auto:STB|2014/06/18|12:29:53]
\bauthor{\bsnm{Engle},~\bfnm{R.~F.}\binits{R.~F.}},
\bauthor{\bsnm{Granger},~\bfnm{C.~W.~J.}\binits{C.~W.~J.}},
\bauthor{\bsnm{Rice},~\bfnm{J.}\binits{J.}} \AND
\bauthor{\bsnm{Weiss},~\bfnm{A.}\binits{A.}}
(\byear{1986}).
\btitle{Nonparametric estimates of the relation between weather and
electricity sales}.
\bjournal{J. Amer. Statist. Assoc.}
\bvolume{81}
\bpages{310--320}.
\end{barticle}
%
\bptok{imsref}%
\endbibitem

\bibitem[\protect\citeauthoryear{Everson and Morris}{2000}]{everson-morris-2000}
%
\begin{barticle}[mr]
\bauthor{\bsnm{Everson},~\bfnm{Philip~J.}\binits{P.~J.}} \AND
\bauthor{\bsnm{Morris},~\bfnm{Carl~N.}\binits{C.~N.}}
(\byear{2000}).
\btitle{Inference for multivariate normal hierarchical models}.
\bjournal{J. R. Stat. Soc. Ser. B Stat. Methodol.}
\bvolume{62}
\bpages{399--412}.
\bid{doi={10.1111/1467-9868.00239}, issn={1369-7412}, mr={1749547}}
\end{barticle}
%
\bptok{imsref}%
\endbibitem

\bibitem[\protect\citeauthoryear{Ferguson}{1973}]{ferguson-1973}
%
\begin{barticle}[mr]
\bauthor{\bsnm{Ferguson},~\bfnm{Thomas~S.}\binits{T.~S.}}
(\byear{1973}).
\btitle{A {B}ayesian analysis of some nonparametric problems}.
\bjournal{Ann. Statist.}
\bvolume{1}
\bpages{209--230}.
\bid{issn={0090-5364}, mr={0350949}}
\end{barticle}
%
\bptok{imsref}%
\endbibitem

\bibitem[\protect\citeauthoryear{Freedman}{1963}]{freedman-1963}
%
\begin{barticle}[mr]
\bauthor{\bsnm{Freedman},~\bfnm{David~A.}\binits{D.~A.}}
(\byear{1963}).
\btitle{On the asymptotic behavior of {B}ayes' estimates in the
discrete case}.
\bjournal{Ann. Math. Statist.}
\bvolume{34}
\bpages{1386--1403}.
\bid{issn={0003-4851}, mr={0158483}}
\end{barticle}
%
\bptok{imsref}%
\endbibitem

\bibitem[\protect\citeauthoryear{Freedman}{1965}]{freedman-1965}
%
\begin{barticle}[mr]
\bauthor{\bsnm{Freedman},~\bfnm{David~A.}\binits{D.~A.}}
(\byear{1965}).
\btitle{On the asymptotic behavior of {B}ayes estimates in the discrete
case. {II}}.
\bjournal{Ann. Math. Statist.}
\bvolume{36}
\bpages{454--456}.
\bid{issn={0003-4851}, mr={0174146}}
\end{barticle}
%
\bptok{imsref}%
\endbibitem

\bibitem[\protect\citeauthoryear{Freedman}{1999}]{freedman-1999}
%
\begin{barticle}[mr]
\bauthor{\bsnm{Freedman},~\bfnm{David}\binits{D.}}
(\byear{1999}).
\btitle{On the {B}ernstein--von {M}ises theorem with
infinite-dimensional parameters}.
\bjournal{Ann. Statist.}
\bvolume{27}
\bpages{1119--1140}.
\bid{doi={10.1214/aos/1017938917}, issn={0090-5364}, mr={1740119}}
\end{barticle}
%
\bptok{imsref}%
\endbibitem

\bibitem[\protect\citeauthoryear{Ghosal, Ghosh and van~der
Vaart}{2000}]{ghosal-etal-2000}
%
\begin{barticle}[mr]
\bauthor{\bsnm{Ghosal},~\bfnm{Subhashis}\binits{S.}},
\bauthor{\bsnm{Ghosh},~\bfnm{Jayanta~K.}\binits{J.~K.}} \AND
\bauthor{\bsnm{van~der Vaart},~\bfnm{Aad~W.}\binits{A.~W.}}
(\byear{2000}).
\btitle{Convergence rates of posterior distributions}.
\bjournal{Ann. Statist.}
\bvolume{28}
\bpages{500--531}.
\bid{doi={10.1214/aos/1016218228}, issn={0090-5364}, mr={1790007}}
\end{barticle}
%
\bptok{imsref}%
\endbibitem

\bibitem[\protect\citeauthoryear{Goldstein}{2006}]{goldstein-2006}
%
\begin{barticle}[mr]
\bauthor{\bsnm{Goldstein},~\bfnm{Michael}\binits{M.}}
(\byear{2006}).
\btitle{Subjective {B}ayesian analysis: Principles and practice}.
\bjournal{Bayesian Anal.}
\bvolume{1}
\bpages{403--420 (electronic)}.
\bid{mr={2221272}}
\end{barticle}
%
\bptok{imsref}%
\endbibitem

\bibitem[\protect\citeauthoryear{Greenshtein, Park and
Ritov}{2008}]{greenshtein-etal-2008}
%
\begin{barticle}[mr]
\bauthor{\bsnm{Greenshtein},~\bfnm{Eitan}\binits{E.}},
\bauthor{\bsnm{Park},~\bfnm{Junyong}\binits{J.}} \AND
\bauthor{\bsnm{Ritov},~\bfnm{Ya'acov}\binits{Y.}}
(\byear{2008}).
\btitle{Estimating the mean of high valued observations in high dimensions}.
\bjournal{J. Stat. Theory Pract.}
\bvolume{2}
\bpages{407--418}.
\bid{doi={10.1080/15598608.2008.10411883}, issn={1559-8608}, mr={2528789}}
\end{barticle}
%
\bptok{imsref}%
\endbibitem

\bibitem[\protect\citeauthoryear{Harmeling and
Toussaint}{2007}]{harmeling-toussaint-2007}
%
\begin{bmisc}[auto:STB|2014/06/18|12:29:53]
\bauthor{\bsnm{Harmeling},~\bfnm{S.}\binits{S.}} \AND
\bauthor{\bsnm{Toussaint},~\bfnm{M.}\binits{M.}}
(\byear{2007}).
\bhowpublished{Bayesian estimators for Robins--{R}itov's problem.
Technical report, Univ. Edinburgh, School of Informatics Research
Report EDI-INF-RR-1189}.
\end{bmisc}
%
\bptok{imsref}%
\endbibitem

\bibitem[\protect\citeauthoryear{Ibragimov and Hasminskii}{1984}]{ibragimov-hasminskii-1984}
%
\begin{barticle}[author]
\bauthor{\bsnm{Ibragimov},~\bfnm{I.~A.}\binits{I.~A.}} \AND
\bauthor{\bsnm{Hasminskii},~\bfnm{R.~Z.}\binits{R.~Z.}}
(\byear{1984}).
\btitle{On nonparametric estimation of a linear functional in {G}aussian white noise}.
\bjournal{Theory Probab. Appl.}
\bvolume{29}
\bpages{19--32}.
\end{barticle}
%
\bptok{imsref}%
\endbibitem

\bibitem[\protect\citeauthoryear{Kleijn and van~der
Vaart}{2006}]{kleijn-vandervaart-2006}
%
\begin{barticle}[mr]
\bauthor{\bsnm{Kleijn},~\bfnm{B.~J.~K.}\binits{B.~J.~K.}} \AND
\bauthor{\bsnm{van~der Vaart},~\bfnm{A.~W.}\binits{A.~W.}}
(\byear{2006}).
\btitle{Misspecification in infinite-dimensional {B}ayesian statistics}.
\bjournal{Ann. Statist.}
\bvolume{34}
\bpages{837--877}.
\bid{doi={10.1214/009053606000000029}, issn={0090-5364}, mr={2283395}}
\end{barticle}
%
\bptok{imsref}%
\endbibitem

\bibitem[\protect\citeauthoryear{Lehmann and
Casella}{1998}]{lehmann-casella-1998}
%
\begin{bbook}[mr]
\bauthor{\bsnm{Lehmann},~\bfnm{E.~L.}\binits{E.~L.}} \AND
\bauthor{\bsnm{Casella},~\bfnm{George}\binits{G.}}
(\byear{1998}).
\btitle{Theory of Point Estimation},
\bedition{2nd} ed.
\bpublisher{Springer},
\blocation{New York}.
\bid{mr={1639875}}
\end{bbook}
%
\bptok{imsref}%
\endbibitem

\bibitem[\protect\citeauthoryear{Le~Cam and Yang}{1990}]{lecam-yang-1990}
%
\begin{bbook}[mr]
\bauthor{\bsnm{Le Cam},~\bfnm{Lucien}\binits{L.}} \AND
\bauthor{\bsnm{Yang},~\bfnm{Grace~Lo}\binits{G.~L.}}
(\byear{1990}).
\btitle{Asymptotics in Statistics: Some Basic Concepts}.
\bpublisher{Springer},
\blocation{New York}.
\bid{doi={10.1007/978-1-4684-0377-0}, mr={1066869}}
\end{bbook}
%
\bptok{imsref}%
\endbibitem

\bibitem[\protect\citeauthoryear{Li}{1999}]{li-1999}
%
\begin{barticle}[auto:STB|2014/06/18|12:29:53]
\bauthor{\bsnm{Li},~\bfnm{K.}\binits{K.}}
(\byear{1999}).
\btitle{Testing symmetry and proportionality in ppp}.
\bjournal{J. Bus. Econom. Statist.}
\bvolume{17}
\bpages{409--418}.
\end{barticle}
%
\bptok{imsref}%
\endbibitem

\bibitem[\protect\citeauthoryear{Li}{2010}]{li-2010}
%
\begin{barticle}[mr]
\bauthor{\bsnm{Li},~\bfnm{Longhai}\binits{L.}}
(\byear{2010}).
\btitle{Are {B}ayesian inferences weak for {W}asserman's example?}
\bjournal{Comm. Statist. Simulation Comput.}
\bvolume{39}
\bpages{655--667}.
\bid{doi={10.1080/03610910903576540}, issn={0361-0918}, mr={2784548}}
\end{barticle}
%
\bptok{imsref}%
\endbibitem

\bibitem[\protect\citeauthoryear{Lindley}{1953}]{lindley-1953}
%
\begin{barticle}[mr]
\bauthor{\bsnm{Lindley},~\bfnm{D.~V.}\binits{D.~V.}}
(\byear{1953}).
\btitle{Statistical inference}.
\bjournal{J. R. Stat. Soc. Ser. B Stat. Methodol.}
\bvolume{15}
\bpages{30--65; discussion 65--76}.
\bid{issn={0035-9246}, mr={0057522}}
\bptnote{check related}%
\end{barticle}
%
\bptok{imsref}%
\endbibitem

\bibitem[\protect\citeauthoryear{Lindley and Smith}{1972}]{lindley-smith-1972}
%
\begin{barticle}[mr]
\bauthor{\bsnm{Lindley},~\bfnm{D.~V.}\binits{D.~V.}} \AND
\bauthor{\bsnm{Smith},~\bfnm{A.~F.~M.}\binits{A.~F.~M.}}
(\byear{1972}).
\btitle{Bayes estimates for the linear model}.
\bjournal{J. R. Stat. Soc. Ser. B Stat. Methodol.}
\bvolume{34}
\bpages{1--41}.
\bid{issn={0035-9246}, mr={0415861}}
\bptnote{check related}%
\end{barticle}
%
\bptok{imsref}%
\endbibitem

\bibitem[\protect\citeauthoryear{McShane and Wyner}{2011}]{mcshane-wyner-2011}
%
\begin{barticle}[auto:STB|2014/06/18|12:29:53]
\bauthor{\bsnm{McShane},~\bfnm{B.~B.}\binits{B.~B.}} \AND
\bauthor{\bsnm{Wyner},~\bfnm{A.~J.}\binits{A.~J.}}
(\byear{2011}).
\btitle{A statistical analysis of multiple temperature proxies: Are
reconstructions of surface temperatures over the last 1000 years reliable?}
\bjournal{Ann. Appl. Stat.}
\bvolume{5}
\bpages{5--44}.
\bid{mr={2810377}}
\end{barticle}
%
\bptok{imsref}%
\endbibitem

\bibitem[\protect\citeauthoryear{Nussbaum}{1996}]{nussbaum-1996}
%
\begin{barticle}[mr]
\bauthor{\bsnm{Nussbaum},~\bfnm{Michael}\binits{M.}}
(\byear{1996}).
\btitle{Asymptotic equivalence of density estimation and {G}aussian
white noise}.
\bjournal{Ann. Statist.}
\bvolume{24}
\bpages{2399--2430}.
\bid{doi={10.1214/aos/1032181160}, issn={0090-5364}, mr={1425959}}
\end{barticle}
%
\bptok{imsref}%
\endbibitem

\bibitem[\protect\citeauthoryear{Robins and Ritov}{1997}]{robins-ritov-1997}
%
\begin{barticle}[auto:STB|2014/06/18|12:29:53]
\bauthor{\bsnm{Robins},~\bfnm{J.~M.}\binits{J.~M.}} \AND
\bauthor{\bsnm{Ritov},~\bfnm{Y.}\binits{Y.}}
(\byear{1997}).
\btitle{Toward a curse of dimensionality appropriate (coda) asymptotic
theory for semiparametric models}.
\bjournal{Stat. Med.}
\bvolume{17}
\bpages{285--319}.
\end{barticle}
%
\bptok{imsref}%
\endbibitem

\bibitem[\protect\citeauthoryear{Robins et~al.}{2009}]{robins-etal-2009}
%
\begin{barticle}[mr]
\bauthor{\bsnm{Robins},~\bfnm{James}\binits{J.}},
\bauthor{\bsnm{Tchetgen},~\bfnm{Eric Tchetgen}\binits{E.~T.}},
\bauthor{\bsnm{Li},~\bfnm{Lingling}\binits{L.}} \AND
\bauthor{\bsnm{van~der Vaart},~\bfnm{Aad}\binits{A.}}
(\byear{2009}).
\btitle{Semiparametric minimax rates}.
\bjournal{Electron. J. Stat.}
\bvolume{3}
\bpages{1305--1321}.
\bid{doi={10.1214/09-EJS479}, issn={1935-7524}, mr={2566189}}
\end{barticle}
%
\bptok{imsref}%
\endbibitem

\bibitem[\protect\citeauthoryear{Savage}{1961}]{savage-1961}
%
\begin{bincollection}[mr]
\bauthor{\bsnm{Savage},~\bfnm{Leonard~J.}\binits{L.~J.}}
(\byear{1961}).
\btitle{The foundations of statistics reconsidered}.
In \bbooktitle{Proc. 4th {B}erkeley {S}ympos. {M}ath. {S}tatist. and
{P}rob., {V}ol. {I}}
\bpages{575--586}.
\bpublisher{Univ. California Press},
\blocation{Berkeley, CA}.
\bid{mr={0133898}}
\end{bincollection}
%
\bptok{imsref}%
\endbibitem

\bibitem[\protect\citeauthoryear{Schick}{1993}]{schick-1986}
%
\begin{barticle}[mr]
\bauthor{\bsnm{Schick},~\bfnm{Anton}\binits{A.}}
(\byear{1993}).
\btitle{On efficient estimation in regression models}.
\bjournal{Ann. Statist.}
\bvolume{21}
\bpages{1486--1521}.
\bid{doi={10.1214/aos/1176349269}, issn={0090-5364}, mr={1241276}}
\bptnote{check year}%
\end{barticle}
%
\bptok{imsref}%
\endbibitem

\bibitem[\protect\citeauthoryear{Smith}{1986}]{smith-1986}
%
\begin{barticle}[auto:STB|2014/06/18|12:29:53]
\bauthor{\bsnm{Smith},~\bfnm{A.~F.~M.}\binits{A.~F.~M.}}
(\byear{1986}).
\btitle{Some Bayesian thoughts on modelling and model choice}.
\bjournal{J. R. Stat. Soc. Ser. D (The Statistician)}
\bvolume{35}
\bpages{97--101}.
\end{barticle}
%
\bptok{imsref}%
\endbibitem

\bibitem[\protect\citeauthoryear{van~der Pas and
Kleijn}{2014}]{vanderpas-etal-2013}
%
\begin{bmisc}[auto:STB|2014/06/18|12:29:53]
\bauthor{\bsnm{van~der Pas},~\bfnm{S.~L.}\binits{S.~L.}} \AND
\bauthor{\bsnm{Kleijn},~\bfnm{B.~J.~K.}\binits{B.~J.~K.}}
(\byear{2014}).
\bhowpublished{The horseshoe estimator: Posterior concentration around
nearly black vectors. \emph{Electron. J. Statist.} \textbf{8} 2585--2618}.
\bid{mr={3285877}}
\end{bmisc}
%
\bptok{imsref}%
\endbibitem

\bibitem[\protect\citeauthoryear{Wang, Brown and Cai}{2011}]{wang-etal-2011}
%
\begin{barticle}[mr]
\bauthor{\bsnm{Wang},~\bfnm{Lie}\binits{L.}},
\bauthor{\bsnm{Brown},~\bfnm{Lawrence~D.}\binits{L.~D.}} \AND
\bauthor{\bsnm{Cai},~\bfnm{T.~Tony}\binits{T.~T.}}
(\byear{2011}).
\btitle{A difference based approach to the semiparametric partial
linear model}.
\bjournal{Electron. J. Stat.}
\bvolume{5}
\bpages{619--641}.
\bid{doi={10.1214/11-EJS621}, issn={1935-7524}, mr={2813557}}
\end{barticle}
%
\bptok{imsref}%
\endbibitem

\bibitem[\protect\citeauthoryear{Wasserman}{2000}]{wasserman-2000}
%
\begin{barticle}[mr]
\bauthor{\bsnm{Wasserman},~\bfnm{Larry}\binits{L.}}
(\byear{2000}).
\btitle{Asymptotic inference for mixture models using data-dependent priors}.
\bjournal{J. R. Stat. Soc. Ser. B Stat. Methodol.}
\bvolume{62}
\bpages{159--180}.
\bid{doi={10.1111/1467-9868.00226}, issn={1369-7412}, mr={1747402}}
\end{barticle}
%
\bptok{imsref}%
\endbibitem

\bibitem[\protect\citeauthoryear{Wasserman}{2004}]{wasserman-2004}
%
\begin{bbook}[mr]
\bauthor{\bsnm{Wasserman},~\bfnm{Larry}\binits{L.}}
(\byear{2004}).
\btitle{All of Statistics: A Concise Course in Statistical Inference}.
\bpublisher{Springer},
\blocation{New York}.
\bid{doi={10.1007/978-0-387-21736-9}, mr={2055670}}
\end{bbook}
%
\bptok{imsref}%
\endbibitem

\bibitem[\protect\citeauthoryear{Zhao}{2000}]{zhao-2000}
%
\begin{barticle}[mr]
\bauthor{\bsnm{Zhao},~\bfnm{Linda~H.}\binits{L.~H.}}
(\byear{2000}).
\btitle{Bayesian aspects of some nonparametric problems}.
\bjournal{Ann. Statist.}
\bvolume{28}
\bpages{532--552}.
\bid{doi={10.1214/aos/1016218229}, issn={0090-5364}, mr={1790008}}
\end{barticle}
%
\bptok{imsref}%
\endbibitem
\end{thebibliography}
\end{document}